\newtheorem{theorem}{Theorem}[section]
\newtheorem{lem}[theorem]{Lemma}
\newtheorem{prop}[theorem]{Proposition}
\newtheorem{cor}[theorem]{Corollary}
\theoremstyle{definition}
\newtheorem{definition}[theorem]{Definition}
\theoremstyle{remark}
\numberwithin{equation}{section}
\newcommand{\Hor}{{\mathcal{H}}}
\newcommand{\V}{{\mathcal{V}}}
\newcommand{\ra}{\rightarrow}
\newcommand{\C}{{\mathbb{C}}}
\newcommand{\HH}{{\mathbb{H}}}
\newcommand{\Ad}{\text{Ad}}
\newcommand{\Soul}{\Sigma}
\newcommand{\lb}{\langle}
\newcommand{\rb}{\rangle}
\newcommand{\bs}{\backslash}
\newcommand{\T}{{\rm T}}
\newcommand{\R}{\mathbb{R}}
\newcommand{\ii}{\mathbf{i}}
\newcommand{\jj}{\mathbf{j}}
\newcommand{\kk}{\mathbf{k}}
\begin{document}

\title{Metrics with nonnegative curvature on $S^2\times\R^4$}
\author{Kristopher Tapp}

\begin{abstract}
We study nonnegatively curved metrics on $S^2\times\R^4$.  First, we prove rigidity theorems for connection metrics; for example, the holonomy group of the normal bundle of the soul must lie in a maximal torus of $\text{SO}(4)$.  Next, we prove that Wilking's almost-positively curved metric on $S^2\times S^3$ extends to a nonnegatively curved metric on $S^2\times\R^4$ (so that Wilking's space becomes the distance sphere of radius $1$ about the soul).  We describe in detail the geometry of this extended metric.
\end{abstract}

\maketitle
\date{\today}

\section{Introduction and Background}
The nonnegatively curved metrics on $S^2\times\R^2$ were classified in~\cite{GT}.  Rigidity result for nonnegatively curved metrics on $S^2\times\R^3$ were obtained in~\cite{T2}, including a classification of the connection metrics.  Aside from these results, very little is known about the family of nonnegatively curved metrics on $S^n\times\R^k$.  The significance of this problem derives in part from its relationship to the generalized Hopf conjecture.  In fact, there are general relationships between the nonnegatively curved metrics on a vector bundle and on its unit sphere bundle, which we will now review.

Suppose that $M$ is an open manifold with nonnegative curvature.  According to~\cite{CG}, $M$ is diffeomorphic to the total space of the normal bundle of its soul, $\Sigma\subset M$.  We will denote this normal bundle as $\nu(\Soul)$, and its fiber at $p\in\Soul$ as $\nu_p(\Soul)$.  According to~\cite{GW}, a tube of sufficiently small radius about $\Soul$ is convex, so the tube's boundary (which can be identified with the total space of the unit sphere bundle, $\nu^1(\Soul)$, of $\nu(\Soul)$) inherits nonnegative curvature.  The following ``soul inequality'' for the curvature tensor, $R$, of $M$  is found in~\cite{T}:

\begin{prop}[\cite{T}] \label{soulin}For all $p\in\Soul$, $X,Y\in T_p\Soul$ and $V,W\in\nu_p(\Soul)$, we have:
$$(D_XR)(X,Y,W,V)^2 \leq \left( |R(W,V,X)|^2 + \frac 23 (D_XD_XR)(W,V,W,V)\right)\cdot R(X,Y,X,Y).$$
\end{prop}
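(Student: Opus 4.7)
The strategy is to derive the claimed inequality as the discriminant condition for a quadratic-in-$\tau$ nonnegativity statement that comes from the sectional curvature of a carefully chosen one-parameter family of 2-planes along a geodesic of $\Sigma$.

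First, since $\Sigma$ is totally geodesic in $M$ (by the Cheeger--Gromoll soul theorem), the Codazzi equation gives $R^M(A,B,C,N) = 0$ whenever $A,B,C \in T\Sigma$ and $N \in \nu\Sigma$, and the analogous vanishing holds for all iterated covariant derivatives in $T\Sigma$-directions. I would choose the geodesic $\gamma(s) = \exp_p(sX) \subset \Sigma$ and parallel-translate $Y, V, W$ along $\gamma$; the splitting $T_{\gamma(s)}M = T_{\gamma(s)}\Sigma \oplus \nu_{\gamma(s)}\Sigma$ is preserved, and the Codazzi vanishings hold along $\gamma$.

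For small parameters $s, t$, I would form the 2-plane
\[
\Pi(s,t) = \spn\bigl(X(s) + tW(s),\; Y(s) + tV(s)\bigr) \subset T_{\gamma(s)}M
\]
and use that $K(\Pi(s,t)) \ge 0$. Taylor-expanding the unnormalized numerator $h(s,t) = R_{\gamma(s)}(X+tW,\, Y+tV,\, X+tW,\, Y+tV)$ bivariately, the Codazzi vanishings (and their $T\Sigma$-derivatives) annihilate all low-order coefficients of odd $t$-degree. The surviving terms at the relevant orders are exactly $R(X,Y,X,Y)$, $(D_X R)(X,Y,W,V)$, $(D_X^2 R)(W,V,W,V)$, the algebraic curvature $|R(W,V)X|^2$, and nonnegative ``fill'' contributions such as $R(X,V,X,V)$ and $R(W,Y,W,Y)$, with cross terms reorganized via the first Bianchi identity so that the combination $R(X,Y,W,V) + R(W,Y,X,V)$ is re-expressed as a contraction of $R(W,V)X$.

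Finally I would rescale $t$ as a function of $s$ (say $t = \tau s$ with $\tau$ free), divide the polynomial inequality $h(s,\tau s) \ge 0$ by the leading power of $s$, and recognize the resulting leading obstruction as a quadratic form in $\tau$ whose discriminant is precisely the claim. The main obstacle is calibrating the ansatz --- likely by adding a second-order correction in one of the spanning vectors of $\Pi(s,t)$ --- so that the factor $\tfrac{2}{3}$ emerges cleanly from the combined Taylor expansions of $R$ along $\gamma$ and of the parallelogram area $|(X+tW) \wedge (Y+tV)|^2$, and verifying that the ``fill'' terms contribute nonnegatively to the final estimate rather than polluting the target coefficients.
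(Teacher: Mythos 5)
The statement you are trying to prove is not actually proved in this paper: it is Proposition~\ref{soulin}, cited from~\cite{T} (Tapp, \emph{Conditions of nonnegative curvature on vector bundles and sphere bundles}).  So there is no internal argument to compare against; I can only assess your sketch on its own terms, and as written it has a structural gap.

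The high-level strategy---Taylor-expand the numerator of the sectional curvature of a family of planes anchored at the soul, use the Codazzi vanishings $R(T\Soul,T\Soul,T\Soul,\nu\Soul)=0$ and $R(\nu\Soul,\nu\Soul,\nu\Soul,T\Soul)=0$ at soul points, and read off the inequality as a discriminant---is indeed the kind of argument that produces soul-type inequalities.  But your particular ansatz
\[
\Pi(s,t)=\spn\bigl(X(s)+tW(s),\,Y(s)+tV(s)\bigr)
\]
is anchored at the wrong plane.  At $(s,t)=(0,0)$ the numerator is $h(0,0)=R(X,Y,X,Y)$, which is generically \emph{strictly positive}; nonnegativity of $h$ near the origin is therefore vacuous and contributes no constraint.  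Your proposed rescaling $t=\tau s$ does not rescue this: the $s^0$ coefficient of $h(s,\tau s)$ is $R(X,Y,X,Y)$, which is $\tau$-independent, so after ``dividing by the leading power of $s$'' you are just left with $R(X,Y,X,Y)+O(s)\ge 0$, which is automatic and yields no quadratic in $\tau$ whose discriminant could be the stated inequality.  A discriminant argument requires a point in the $(s,t)$-family at which the curvature is actually \emph{zero} (or at least vanishes to high order), and your anchor $\spn(X,Y)$ does not provide one.  (The natural zero-anchor is the vertical plane $\spn(W,V)$, but even then $R(W,V,W,V)=k^f(W,V)\ge 0$ need not vanish, which is why the published argument is not a one-line Hessian-discriminant computation.)

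A second problem is that the ``fill'' terms you hope to discard---$R(X,V,X,V)$, $R(W,Y,W,Y)$, and the Bianchi-reorganized cross term $R(X,Y,W,V)+R(X,V,W,Y)$---land in exactly the Taylor coefficients that would need to reproduce $|R(W,V,X)|^2$, $(D_XD_XR)(W,V,W,V)$, and $R(X,Y,X,Y)$ for the discriminant to match the claim.  They are nonnegative, but that is not the issue: they pollute the coefficients of the would-be quadratic, and nothing in your set-up converts them into the quantities that actually appear on the right-hand side.  Finally, you explicitly flag the origin of the factor $\tfrac 23$ as unresolved (``the main obstacle is calibrating the ansatz\dots''); that factor is precisely what distinguishes this proposition from the weaker Strake--Walschap-type algebraic inequality, so leaving it undetermined means the central point of the lemma is not established.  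In short: right general flavor, but the wrong anchor plane, an unworkable rescaling, and the key coefficient left undetermined; this is a heuristic sketch rather than a proof.
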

In this inequality, we are considering $R$ sometimes as a function from  $(T_pM)^3\ra T_pM$ and sometimes from $(T_pM)^4\ra\R$.  The following can help decide whether a point  of $\nu^1(\Soul)$ has strictly positive curvature:
\begin{definition} A non-zero vector $V\in\nu_p(\Soul)$ is called ``good'' if the inequality of Proposition~\ref{soulin} is \emph{strictly} satisfied for all $X,Y\in T_p\Soul$ with $|X\wedge Y|\neq 0$ and all $W\in\nu_p(\Soul)$ with $|V\wedge W|\neq 0$.
\end{definition}
\begin{prop}[\cite{T}]
If $V$ is good, then for sufficiently small $\epsilon$, $\exp(\epsilon\cdot V)$ is a point of $M$ at which all planes tangent to the $\epsilon$-sphere about $\Soul$ have strictly positive curvature.
\end{prop}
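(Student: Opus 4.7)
The plan is to show that for small $\epsilon$, the sectional curvature $K_{S^\epsilon}(\sigma)$ of every 2-plane $\sigma$ tangent to the $\epsilon$-sphere $S^\epsilon$ at $q_\epsilon:=\exp(\epsilon V)$ is strictly positive, by Taylor expanding in $\epsilon$ and recognizing the leading expression as precisely the quadratic form controlled by Proposition~\ref{soulin}. Parallel transport along the radial geodesic $\gamma(t):=\exp(tV)$ identifies $T_{q_\epsilon}M$ with $T_pM=T_p\Soul\oplus\nu_p(\Soul)$; under this identification the tangent space of $S^\epsilon$ at $q_\epsilon$ corresponds to $T_p\Soul\oplus V^\perp$, where $V^\perp$ denotes the orthogonal complement of $V$ in $\nu_p(\Soul)$.

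The Gauss equation expresses $K_{S^\epsilon}(\sigma)$ as $K_M(\sigma)$ plus a shape-operator correction. A Jacobi-field expansion along $\gamma$, together with the fact that $\Soul$ is totally geodesic, shows that the shape operator of $S^\epsilon$ at $q_\epsilon$ behaves like $\tfrac{1}{\epsilon}\text{Id}+O(\epsilon)$ on $V^\perp$ and like $O(\epsilon)$ on $T_p\Soul$. Consequently, any plane $\sigma$ with a nontrivial $V^\perp\wedge V^\perp$ component contributes a dominant $\sim 1/\epsilon^2$ term and is already strictly positively curved for small $\epsilon$. This reduces the problem to planes that, after rotation within $T_p\Soul\oplus V^\perp$, can be written in the form $\text{span}\{X+sW,\,Y\}$ with $X,Y\in T_p\Soul$ linearly independent, $W\in V^\perp$, and $s\in\R$.

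For such a plane I would expand $R(X_\epsilon+sW_\epsilon,Y_\epsilon,X_\epsilon+sW_\epsilon,Y_\epsilon)$, with subscripts denoting parallel transport along $\gamma$, as a Taylor series in $\epsilon$ through second order using covariant derivatives of $R$ at $p$, and then add the remaining shape-operator correction. After applying the second Bianchi identity, the symmetries of $R$, and the vanishing identities for $R$ at the soul, the resulting expression becomes a quadratic in $s$ whose discriminant agrees, up to a strictly positive factor, with
\[
(D_XR)(X,Y,W,V)^2 - \left(|R(W,V,X)|^2 + \tfrac{2}{3}(D_XD_XR)(W,V,W,V)\right)R(X,Y,X,Y).
\]
The ``good'' hypothesis says this discriminant is strictly negative whenever $X\wedge Y\neq 0$ and $V\wedge W\neq 0$, so the quadratic is everywhere strictly positive, giving $K_{S^\epsilon}(\sigma)>0$ for all such $\sigma$ once $\epsilon$ is small enough that the higher-order remainders are absorbed.

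The main obstacle will be identifying the $\epsilon^2$-coefficient in the expansion with $\tfrac{2}{3}(D_XD_XR)(W,V,W,V)$ rather than a second radial derivative of $R$; this conversion from radial to tangential second derivatives is exactly what forces the factor $2/3$ in Proposition~\ref{soulin} and requires careful use of the second Bianchi identity combined with the ``horizontal'' vanishing relations at the soul. The degenerate boundary cases $X\wedge Y=0$ or $W=0$, where the strict form of the soul inequality is not directly available, must be handled separately: in those directions the shape-operator contribution to $K_{S^\epsilon}$ strictly dominates and provides the required positivity on its own.
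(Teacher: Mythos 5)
Your overall strategy---Gauss equation, shape-operator estimates from Jacobi fields along the radial geodesic, Taylor expansion at $q_\epsilon=\exp(\epsilon V)$, and recognizing the soul inequality as a discriminant condition on a quadratic in $s$---is the correct route, and it matches the derivation in~\cite{T} (this paper only cites the statement, it does not re-prove it). The computational crux you flag, namely converting radial derivatives of $R$ into tangential ones via the second Bianchi identity and the vanishing relations at the soul to produce the $\tfrac{2}{3}$ coefficient, is indeed where all the work lies. The reduction, via $1/\epsilon^2$ dominance of the shape-operator determinant, to planes whose projection onto $V^\perp$ is at most one-dimensional is also sound, and the constant term of your quadratic is handled correctly because strictness of the soul inequality forces $R(X,Y,X,Y)>0$ for all independent $X,Y\in T_p\Soul$.

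The genuine gap is your handling of the degenerate case $X\wedge Y=0$, i.e.\ planes $\sigma=\text{span}\{W,Y\}$ with $W\in V^\perp$ and $Y\in T_p\Soul$. You assert the shape-operator contribution ``strictly dominates and provides the required positivity on its own,'' but this is false as stated: the shape operator has one eigenvalue $\sim 1/\epsilon$ in the $W$-direction but only $O(\epsilon)$ in the $Y$-direction (the soul is totally geodesic), so the extrinsic determinant on $\sigma$ is merely $O(1)$ and, without further input, could vanish. What actually rescues this case is your own discriminant argument, applied consistently: fix any $X\in T_p\Soul$ with $X\wedge Y\neq 0$ and consider the quadratic $q(s)=k_{S^\epsilon}(X_\epsilon+sW_\epsilon,Y_\epsilon)$. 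Once you know $q$ has negative discriminant (from the strict soul inequality at $(X,Y,W,V)$) and positive constant term $q(0)\to k_\Soul(X,Y)>0$, the leading coefficient of $q$---which up to a positive normalization is exactly $k_{S^\epsilon}(W,Y)$---must itself be strictly positive. So the mixed degenerate planes do not require a separate mechanism; they are the ``$s\to\infty$'' consequence of the non-degenerate estimate, and you should say so rather than appeal to an unjustified extrinsic-dominance claim. (Similarly, for $W=0$ the positivity comes from the intrinsic soul curvature, not the shape operator.) One further point worth making explicit: to obtain a single $\epsilon_0$ working for \emph{all} tangent $2$-planes at once you need a compactness argument over the Grassmannian, since the $1/\epsilon^2$ bound degenerates as the plane approaches the set with one-dimensional $V^\perp$-projection.
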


The soul inequality was originally expressed in~\cite{T} in a manner which more explicitly distinguished the three different curvatures which it relates:
$$\lb (D_XR^\nabla)(X,Y)W,V\rb^2 \leq\left(|R^\nabla(W,V)X|^2 + \frac 23 (D_XD_X k^f)(W,V)\right)\cdot k_\Soul(X,Y).$$
Here, $R^\nabla$ denotes the curvature tensor of the induced connection, $\nabla$, in $\nu(\Soul)$, and $k_\Soul$  and $k^f$ denote respectively the unnormalized intrinsic sectional curvatures of $\Soul$ and of the Sharafutinov fiber, $\exp(\nu_p(\Soul))$.  This point of view leads to the idea of putting positive curvature on a sphere bundle by finding structures on the vector bundle which make the inequality strict:
\begin{prop}[\cite{T}] If structures on a Euclidean vector bundle (a metric on the base, a connection compatible with the Euclidean structure, and a smoothly varying curvature tensor on each fiber) can be found such that all unit-vectors are good, then its sphere bundle admits a metric with positive curvature.
 \end{prop}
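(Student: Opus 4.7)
The plan is to use the prescribed data to build an open, nonnegatively curved Riemannian manifold $(U,g)$ whose soul is the base $\Soul$ of the bundle, and then to apply the preceding proposition uniformly on the unit normal bundle.  Write $\pi:E\to\Soul$ for the Euclidean vector bundle, with base metric $g_\Soul$, metric connection $\nabla$, and smoothly varying algebraic fiber curvature tensor $R^f_p$ on each $E_p$.  I would build $g$ on a tubular neighborhood $U$ of the zero section so that: (i) the zero section, isometric to $(\Soul,g_\Soul)$, is totally geodesic; (ii) the normal connection on $\nu(\Soul)\cong E$ induced by $g$ agrees with $\nabla$; and (iii) each Sharafutinov fiber $\exp(\nu_p(\Soul))$ has intrinsic curvature tensor at $p$ equal to $R^f_p$.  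A natural construction is the connection-metric ansatz: use $\nabla$ to split $TE$ into horizontal and vertical distributions, declare them $g$-orthogonal, set the horizontal metric to $\pi^*g_\Soul$, and specify the metric on each fiber $E_p$ by a normal-coordinate Taylor expansion with leading correction given by $R^f_p$.

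The key computation is to identify the curvature of $g$ at points of $\Soul$ in terms of the prescribed data.  Using connection-metric formulas (or a direct Fermi-coordinate expansion), one expects to find that for $X,Y\in T_p\Soul$ and $W,V\in E_p$,
$$R(X,Y,X,Y)=k_\Soul(X,Y),\quad \lb R(W,V)X,Y\rb=\lb \RN(W,V)X,Y\rb,\quad R(W,V,W,V)=k^f(W,V),$$
and the covariant derivatives $(D_XR)(X,Y,W,V)$ and $(D_XD_XR)(W,V,W,V)$ agree with $\lb(D_X\RN)(X,Y)W,V\rb$ and $(D_XD_X k^f)(W,V)$, respectively.  The soul inequality applied to $g$ at $p$ is then exactly the stated algebraic inequality in the prescribed data, and the hypothesis that every unit $V$ is good says this inequality is strict uniformly.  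By continuity combined with compactness of $\nu^1(\Soul)$ (since $\Soul$ is compact), $g$ has nonnegative sectional curvature throughout some smaller tube about $\Soul$, which I will continue to call $U$.

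With $(U,g)$ nonnegatively curved, $\Soul$ as its soul, and every unit normal good, the preceding proposition applies to every $V\in\nu^1(\Soul)$: some $\epsilon(V)>0$ exists such that $\exp(\epsilon(V)\cdot V)$ has strictly positive sectional curvature on every plane tangent to the distance-$\epsilon(V)$ sphere about $\Soul$.  Compactness of $\nu^1(\Soul)$ then yields a uniform $\epsilon>0$ that works for all $V$, so every point of the distance sphere $S_\epsilon(\Soul)\subset U$ has positive sectional curvature in every tangent direction.  Since the normal exponential map identifies $S_\epsilon(\Soul)$ diffeomorphically with the unit sphere bundle of $E$, the induced metric on $S_\epsilon(\Soul)$ provides the desired positively curved metric on the sphere bundle.

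The main obstacle is the construction in the first paragraph: producing a metric $g$ whose $2$-jet along $\Soul$ realizes precisely the prescribed triple $(g_\Soul,\nabla,R^f_\cdot)$ while preserving the totally geodesic property of the zero section.  This is a local Fermi-coordinate task, but one must install the mixed second-order terms encoding $\RN$ without disturbing the prescribed fiber metrics and without spoiling the totally geodesic zero section.  Once that construction is in hand, the passage from strictness of the soul inequality at $\Soul$ to nonnegative curvature on a tube is a routine continuity-and-compactness argument, and the conclusion then reduces immediately to the previous proposition.
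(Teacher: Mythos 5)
Your argument contains a genuine gap at the step where you assert ``By continuity combined with compactness of $\nu^1(\Soul)$, $g$ has nonnegative sectional curvature throughout some smaller tube about $\Soul$.''  The strict soul inequality is a condition on the $2$-jet of the curvature tensor of $g$ \emph{along} $\Soul$; it is a necessary consequence of nonnegative curvature, not a sufficient condition for it, and it does not propagate to a pointwise nonnegativity statement on a tube.  In fact it cannot: the prescribed fiber curvature tensor $R^f_p$ is free data, and the soul inequality involves only its second base-derivatives $(D_XD_Xk^f)(W,V)$, so $k^f(W,V)$ itself may be negative.  In that case, any metric realizing $R^f_p$ as the fiber curvature at $p$ already has a plane of negative sectional curvature at the soul, and nonnegative curvature fails on every tube.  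Since Proposition~1.3 (``good $\Rightarrow$ positive curvature on the $\epsilon$-sphere'') is stated for nonnegatively curved $M$, this missing nonnegativity is precisely what you need in order to invoke it, so the reduction to the earlier proposition does not go through.

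The intended argument does not pass through nonnegative curvature of the ambient tube at all.  Rather, one constructs the tube metric realizing the prescribed $2$-jet (as you outline in your first paragraph), and then computes directly, by Taylor expansion in $\epsilon$, the sectional curvatures of planes tangent to the $\epsilon$-distance sphere.  For the degenerate plane families one finds that the zeroth- and first-order terms vanish and that the coefficient of $\epsilon^2$ is precisely the ``right side minus left side'' of the soul inequality; strictness for all unit $V$ (together with positivity for the non-degenerate plane families already at $\epsilon=0$) yields positive curvature on the $\epsilon$-sphere for small $\epsilon$, and compactness of $\nu^1(\Soul)$ gives a uniform such $\epsilon$.  This route is self-contained and requires no curvature hypothesis on the tube, which is essential: the whole point of the proposition is to produce positively curved sphere-bundle metrics from bundle data \emph{without} first exhibiting a nonnegatively curved vector-bundle metric.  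Your final-paragraph acknowledgement that the jet-realization is ``the main obstacle'' is a second, admitted gap, but it is the middle step that is actually fatal.
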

Unfortunately, no new examples of sphere bundles with positive curvature have yet been constructed from this theorem.  The problem is a lack of existing tools for constructing structures (particularly connections) on vector bundles to satisfy this inequality.  Towards improving this situation, we believe that it is important to explicitly understand how the inequality is satisfied by known examples with nonnegative curvature.  The majority of this paper is devoted to understanding the inequality for a particular metric on $S^2\times\R^4$ -- a metric for which we'll prove that good vectors exist.

When the soul is two dimensional, $R^\nabla(X,Y)W$ does not depend on the choice of oriented orthonormal basis $\{X,Y\}$ of $T_p\Soul$, so we'll shorthand this as $R^\nabla(W)$.  With this shorthand, the inequality becomes:
$$\lb (D_XR^\nabla)(W),V\rb^2 \leq\left(\lb R^\nabla(W),V\rb^2 + \frac 23 (D_XD_X k^f)(W,V)\right)\cdot k_\Soul.$$
which is valid for all $p\in\Soul$, all $V,W\in\nu_p(\Soul)$ and all unit-length $X\in T_p\Soul$.

Our paper is organized as follows.  In Section 2, we prove rigidity results for connection metric with nonnegative curvature, including:
\begin{prop}\label{conhol} For any connection metric with nonnegative curvature on an $\R^4$ bundle over $S^2$, the holonomy group of the normal bundle of the soul  must lie in a maximal torus of $SO(4)$.  In other words, $\nu(\Soul)$ globally decomposes as two orthogonal $\nabla$-invariant $\R^2$-bundles over $S^2$.
\end{prop}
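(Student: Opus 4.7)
The plan is to exploit the vanishing of $k^f$ for a connection metric. Since the Sharafutdinov fiber $\exp(\nu_p(\Soul))\cong\R^4$ is flat Euclidean space, $k^f\equiv 0$ together with all its derivatives, and the 2-dimensional-soul form of the soul inequality (displayed just before Proposition~\ref{conhol}) reduces to the pointwise bound
$$\lb (D_XR^\nabla)(W),V\rb^2 \leq k_\Soul\,\lb R^\nabla(W),V\rb^2$$
for every $p\in\Soul$, every unit $X\in T_p\Soul$, and all $V,W\in\nu_p(\Soul)$.

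The key step is an algebraic lemma: if $A,B$ are skew-symmetric endomorphisms of a Euclidean space and $|\lb BW,V\rb|\le C\,|\lb AW,V\rb|$ for all $W,V$, then $B=cA$ for some scalar $c$. The inequality forces $\ker A\subseteq\ker B$, and for $W\notin\ker A$ it makes $BW$ annihilate the hyperplane $(AW)^\perp$, so $BW\in\R\cdot AW$; hence $BW=f(W)\,AW$ for some scalar $f(W)$. Picking $W,W'$ with $AW,AW'$ linearly independent (which exists whenever $A\neq 0$, since $A$ has even rank in $\R^4$), linearity of $B$ forces $f(W)=f(W')=f(W+W')$, so $f$ is constant on the complement of $\ker A$; as both $B$ and $cA$ vanish on $\ker A$, the identity $B=cA$ extends globally.

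Applying the lemma at each $p$ with $A=R^\nabla_p$ and $B=(D_XR^\nabla)_p$ yields a 1-form $c$ on $\Soul$ with $D_XR^\nabla=c(X)\,R^\nabla$. This is a linear first-order equation for $R^\nabla$ along curves, giving a dichotomy: either $R^\nabla\equiv 0$ (in which case $\nu(\Soul)$ is flat and, since $S^2$ is simply connected, trivial, so the splitting is immediate), or $R^\nabla$ is nowhere zero and a direct calculation shows $R^\nabla/\|R^\nabla\|$ is a globally parallel section of $\mathrm{End}(\nu(\Soul))$. Because $\Soul$ is 2-dimensional, every $R^\nabla(X,Y)$ is a scalar multiple of $R^\nabla$, so parallel transport preserves the line through $R^\nabla$, and Ambrose--Singer gives a holonomy Lie algebra of dimension at most one. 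Any 1-dimensional subspace of the compact Lie algebra $\mathfrak{so}(4)$ lies in some Cartan subalgebra, so the holonomy group lies in the corresponding maximal torus $\text{SO}(2)\times\text{SO}(2)$, and the orthogonal decomposition $\R^4=\R^2\oplus\R^2$ preserved by that torus extends by parallel transport to a global $\nabla$-invariant splitting of $\nu(\Soul)$ into two orthogonal rank-$2$ subbundles.

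The main obstacle is the algebraic lemma. Even in degenerate situations such as $R^\nabla$ proportional to a complex structure (whose pointwise stabilizer in $\text{SO}(4)$ is $\U(2)$, not a torus), the holonomy conclusion still holds, because what Ambrose--Singer uses is the 1-dimensional span of the parallel transports of $R^\nabla$ rather than the full pointwise stabilizer of $R^\nabla_p$.
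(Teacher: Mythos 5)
Your proof is correct, but it takes a genuinely different route from the paper's. The paper works directly with the scalar function $f(t)=\lb R^\nabla(V(t)),W(t)\rb$ along parallel extensions and uses the Gronwall-type inequality $f'(t)^2\leq f(t)^2\,k_\Soul$ to conclude that $f\equiv 0$ whenever $f(0)=0$; this shows the relation ``$\lb R^\nabla(V),W\rb=0$'' is preserved by all holonomy elements, so the $R^\nabla$-invariant two-plane decomposition of $\nu_p(\Soul)$ is itself holonomy-invariant and extends by parallel transport. You instead extract the full pointwise consequence of the soul inequality via a nice algebraic lemma (skew-symmetric $B$ dominated in this bilinear sense by skew-symmetric $A$ must be a scalar multiple of $A$), obtaining $D_XR^\nabla=c(X)\,R^\nabla$, then pass to Ambrose--Singer to bound the holonomy Lie algebra by one dimension. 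Both arguments are sound. The paper's is slightly more elementary and avoids Ambrose--Singer; yours extracts more structure at once --- in particular the relation $D_XR^\nabla=c(X)R^\nabla$ immediately gives $XF_1/F_1=c(X)=XF_2/F_2$, so it essentially proves Proposition~\ref{whitlem} ($F_1=\lambda F_2$) for free, which the paper proves separately. One minor point worth tightening: you invoke the ``linear first-order equation'' $D_XR^\nabla=c(X)R^\nabla$ to get the dichotomy that $R^\nabla$ is identically zero or nowhere zero, but $c$ is only defined where $R^\nabla\neq 0$; the clean way to close this (and the one implicit in the paper) is to note directly from the soul inequality that $\|D_XR^\nabla\|\leq\sqrt{k_\Soul}\,\|R^\nabla\|$ in Hilbert--Schmidt norm, so $g(t)=\|R^\nabla(\gamma(t))\|$ satisfies $|g'|\leq\sqrt{k_\Soul}\,g$, and $g(0)=0$ forces $g\equiv 0$.
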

Furthermore, we prove that the restriction of the curvature tensor of $\nabla$ to one of the $\R^2$-bundles is a multiple of its restriction to the other.

In Section~3, we review Wilking's construction from~\cite{W} of an almost positively curved metric on $S^2\times S^3$.  In Section~4, we prove that his metric extends to a nonnegatively curved metric on $S^2\times\R^4$.  In the remaining sections, we prove that this extended metric has the following list of geometric features.  Identifying the fiber with  $\HH$ = the quaternions, and the base with  $S^1\bs S^3$ (where $S^3\subset\HH$ is the group of unit quaternions), we have:

\begin{prop}\textbf{(Summary of metric properties of} $\mathbf{M\cong (S^1\bs S^3)\times\mathbf{\HH}}$\textbf{)} \newline The soul of $M$ is $\Soul=\{([p],0)\mid p\in S^3\}$.  Let $q_0=([1],0)\in\Soul$, $X\in T_{q_0}\Soul\cong\text{span}\{\jj,\kk\}$ and $V\in\nu_{q_0}(\Soul)\cong\HH$ (extended to a constant section of $\nu(\Soul)$).
\begin{enumerate}
\item The distance sphere of radius $1$ about $\Soul$ is isometric to Wilking's metric with almost positive curvature on $S^2\times S^3$.
\item $S^3$ acts isometrically on $M$, with $s\in S^3$ acting as $$s\star([a],v)=([as^{-1}],svs^{-1}).$$
\item $\Soul$ has constant curvature $16$.
\item The connection, $\nabla$, in $\nu(\Soul)$ is the unique connection that is invariant under the above $S^3$-action and such that:
    $$\nabla_XV = \frac 34 XV - \frac 14 VX.$$
\item The parallel extension of $V$ along $t\mapsto([e^{tX}],0) = e^{-tX}\star q_0$ equals:
$$V(t) = e^{-\frac 34 tX}\cdot V\cdot e^{\frac 14 tX} = e^{-tX}\star\left( e^{\frac 14 tX}\cdot V\cdot e^{-\frac 34 tX}\right).$$
\item The curvature tensor, $R^\nabla$, of $\nabla$ is determined by:
$$R^\nabla(V) = \frac 72 V\ii - \frac{15}{2}\ii V.$$
\item The covariant derivative, $DR^\nabla$, of $R^\nabla$ is determined by:
$$(D_XR^\nabla)(V) = \frac{15}{8}(X\ii - \ii X)V - \frac{21}{8}V(X\ii-\ii X).$$
\item The holonomy group of $\nu(\Soul)$ is isomorphic to $SO(4)$.
\item If $V$ is not perpendicular to $1$ or $\ii$, then $V$ is a good vector.
\item There exists an orthogonal pair of 2-dimensional subspaces, $\sigma_1,\sigma_2\subset\nu_{q_0}(\Soul)$ (depending on $X$),  whose parallel extensions, $\sigma_1(t)$ and $\sigma_2(t)$, along the geodesic $\gamma(t):=([e^{tX}],0)$, satisfy $R^\nabla(\sigma_1(t))\subset\sigma_2(t)$ for all $t\in\R$.  If $X=\jj$, then  $\sigma_1=\text{span}\{1,\jj\}$ and $\sigma_2=\text{span}\{\ii,\kk\}$.
\end{enumerate}
\end{prop}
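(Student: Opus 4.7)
The plan is to fix the explicit model $M \cong (S^1\bs S^3)\times\HH$ constructed in Section~4 and exploit the $S^3$-symmetry of part~(2) to reduce every computation to the basepoint $q_0=([1],0)$. Part~(2) is verified directly: the diagonal $S^3$-action on $S^3\times S^3$ commutes with the circle quotient of Section~3 and with the extension of Section~4, and descends to the stated formula. Because this action is transitive on $\Soul$ and fixes the zero section, $\Soul$ is the soul as claimed. A Gauss-equation computation at $q_0$, using the warping fixed in Section~4, gives the constant curvature value in~(3). Part~(1) follows by identifying the Euclidean unit subbundle $\{\|v\|=1\}$ with Wilking's $S^2\times S^3$.

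For (4)--(8) I compute $\nabla$ from the Gauss formula. By equivariance under the isotropy at $q_0$, $\nabla_X V$ for a constant section $V$ must have the form $aXV+bVX$; matching against the Koszul formula for the extended metric fixes $a=3/4$, $b=-1/4$, proving~(4). Part~(5) is the explicit integration of $V'(t)=-\nabla_{\gamma'(t)}V(t)$ along $\gamma(t)=([e^{tX}],0)$: since $X$ commutes with $e^{tX}$, the solution factors as $V(t)=e^{-3tX/4}\,V\,e^{tX/4}$. For~(6), apply $R^\nabla(X,Y)V=[\nabla_X,\nabla_Y]V-\nabla_{[X,Y]}V$ with $X=\jj$, $Y=\kk$ and simplify using the quaternion identities and $[\jj,\kk]=2\ii$; the same procedure one order higher yields~(7). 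For~(8), the image of $R^\nabla$ together with its covariant derivatives spans all of $\mathfrak{so}(4)\cong\spp\oplus\spp$, so Ambrose--Singer gives the full holonomy group.

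For~(9), I substitute the expressions from (3), (6), (7), and a separate calculation of $D_XD_X k^f$ (the second covariant derivative of the Sharafutinov-fiber sectional curvature, obtained from the Section~4 extension) into the simplified soul inequality stated in the introduction. After expansion, the deficit is a non-negative quantity that vanishes precisely on $\{V:V\perp 1 \text{ or } V\perp\ii\}$, which is the claim. For~(10), a direct check using~(6) shows that at $q_0$ with $X=\jj$, $R^\nabla$ sends $\spn\{1,\jj\}$ into $\spn\{\ii,\kk\}$; parallel transport along $\gamma(t)=([e^{t\jj}],0)$ preserves this splitting because the formula from~(5) acts by left and right multiplication by $e^{\pm s\jj}$, which stabilize both subspaces. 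The general-$X$ case follows by transporting via the $S^3$-action of~(2), which preserves both the good-vector condition and the splitting structure. The main obstacle is the computation of $D_XD_X k^f$: it requires the second-order behavior of the extended metric transverse to $\Soul$, and this is where the explicit construction from Section~4 becomes essential.
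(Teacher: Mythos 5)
Your overall strategy matches the paper's in spirit -- reduce to the basepoint $q_0$ via the $S^3$-symmetry, then compute explicitly -- but three steps have genuine gaps.

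First, in your argument for part (4): the claim that isotropy-equivariance forces $\nabla_X V$ to have the form $aXV+bVX$ is false. The isotropy at $q_0$ is $S^1$, acting on $T_{q_0}\Soul\cong\text{span}\{\jj,\kk\}$ and on $\nu_{q_0}(\Soul)\cong\HH$ by conjugation; as $S^1$-reps these are $\C_2$ and $\C_0\oplus\C_2$. The connection difference form is an equivariant linear map $\text{span}\{\jj,\kk\}\to\mathfrak{so}(4)$, and since $\mathfrak{so}(4)\cong(\R\oplus\C_2)^{\oplus 2}$ under $\Ad$-conjugation, the space of such maps is $\text{Hom}_{S^1}(\C_2,\mathfrak{so}(4))\cong\C^2$, which is $4$-real-dimensional, not $2$. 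For example $X\mapsto L_{\ii X}$ and $X\mapsto R_{\ii X}$ are equivariant but not of your proposed form. The paper avoids this entirely by computing the connection from horizontal lifts in the Riemannian submersion $\pi:\overline{M}\to M$; equivariance is invoked only to globalize the formula at $q_0$, not to restrict its shape.

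Second, for part (9) you identify the computation of $(D_XD_X k^f)(V,W)$ as ``the main obstacle'' but never carry it out, and this is where most of the paper's work lies: it requires an explicit formula for the horizontal distribution at arbitrary points $(a,rv,x,b,c,y)$ of $\overline{M}$, a computation of the O'Neill $A$-tensor at $\overline{q}_0$, and then an application of O'Neill's curvature formula to get $k^f$ as an explicit quartic form in the components of $V,W$, followed by differentiating that along parallel frames. Without this, you cannot establish the claimed vanishing locus $\{V\perp 1\text{ or }V\perp\ii\}$, nor can you verify the soul inequality at all. Your plan to obtain it ``from the Section~4 extension'' is a pointer to the right ingredients, not a proof.

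Third, for part (8) the Ambrose--Singer route as you state it is incomplete: $R^\nabla$ at $q_0$ together with $(D_\jj R^\nabla)$ and $(D_\kk R^\nabla)$ gives only three elements of the $6$-dimensional $\mathfrak{so}(4)$, namely $\tfrac{7}{2}R_\ii - \tfrac{15}{2}L_\ii$, $-\tfrac{15}{4}L_\kk + \tfrac{21}{4}R_\kk$, and $\tfrac{15}{4}L_\jj - \tfrac{21}{4}R_\jj$. You would need second covariant derivatives (or parallel-transported curvature from other base points) to close up the Lie algebra, and you would also need real-analyticity of the metric to invoke the Ambrose--Singer refinement that holonomy is generated by covariant derivatives of $R^\nabla$ at a single point. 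The paper instead argues directly with the explicit parallel-transport maps $P_\gamma$ along great circles, ruling out all proper subgroups by inspecting their fixed vectors and irreducible subspaces -- a shorter route that sidesteps these issues.

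Finally, a terminology point that may signal a conceptual slip: the normal connection $\nabla$ is not obtained from the Gauss formula (which gives the intrinsic Levi--Civita connection of a submanifold); it is the normal projection of the ambient covariant derivative. This matters because the computation does not reduce to intrinsic data on $\Soul$.
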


The existence of good vectors (Property 9) means that the almost positive curvature of the sphere bundle (Property 1) can be detected by second derivative information at the soul.  The good vectors are exactly those which exponentiate to points which have positive curvature in Wilking's metric.  Property (10) reflects the reason that the sphere bundle does not have strictly positive curvature.  If $V,W$ are both chosen from $\sigma_1$ (or both from $\sigma_2$), and $V(t),W(t)$ denote their parallel extensions along $\gamma(t)$, then Property (10) implies:
$$\lb R^\nabla(V(t)),W(t)\rb = 0\,\,\, \text{ and }\,\,\, \lb (D_{\gamma'(t)}R^\nabla)(V(t)),W(t)\rb = 0$$
for all $t\in\R$.  Furthermore, $(D_{\gamma'(t)}D_{\gamma'(t)} k^f)(W(t),V(t))=0$ because $R^f(V(t),W(t))$ is periodic with nonnegative second derivative, and is therefore constant. Thus, all terms of the soul inequality vanish for the triple $\{\gamma'(t),V(t),W(t)\}$.  In fact, $\sigma_1(t)$ and $\sigma_2(t)$ exponentiate to the totally geodesic flat tori in the sphere bundle which prevent the sphere bundle from having positive curvature.

We mention that Wilking's metric also extends to the vector bundle $S^3\times\R^3$, which is clear from the re-description of Wilking's metric found in~\cite{T3}.  In the final section of this paper, we prove that no structures on $S^3\times\R^3$ (or on any vector bundle over an odd-dimensional base space) could ever satisfy the soul inequality \emph{strictly} (that is, such that all vectors are good).  The analogous statement for $S^2\times\R^4$ is not known, which originally motivated our interest in understanding the extension of Wilking's metric to $S^2\times\R^4$.

The author is pleased to thank Marius Munteanu for discovering the proof of Corollary~\ref{Marius}, Sam Smith for helping with the proof of Lemma~\ref{SAM}, and Wolfgang Ziller for helpful conversations about this work.

\section{Connection Metrics}
In this section, we prove Proposition~\ref{conhol} and other rigidity results for connection metrics.  Suppose that $M$ is the total space of an $\R^4$-bundle over $S^2$ with a connection metric of nonnegative curvature.  Let $\Soul\subset M$ be a soul, let $\nu(\Soul)$ denote the normal bundle, let $\nabla$ denote the induced connection in $\nu(\Soul)$, and let $R^\nabla$ denote its curvature.

Since the metric is a connection metric, $(D_XD_X k^f)(W,V)=0$.  Regarding $R^\nabla$ at $p$ as an endomorphisms of $\nu_p(\Soul)$, the  inequality becomes:
\begin{equation}\label{soul24}\lb (D_XR^\nabla)(V),W\rb^2 \leq\lb R^\nabla(V),W\rb^2\cdot k_\Soul,\end{equation}
which is valid for all $p\in\Soul$, all $V,W\in\nu_p(\Soul)$ and all unit-length $X\in T_p\Soul$.
\begin{proof}[Proof of Proposition~\ref{conhol}]
Let $p\in\Soul$.  Choose vectors $V,W\in\nu_p(\Soul)$ for which $\lb R^\nabla(V),W\rb=0$.  Let $V(t)$ and $W(t)$ denote their parallel extensions along a piecewise geodesic, $\gamma(t)$,  in $\Soul$.  The soul inequality implies that the function $f(t): = \lb R^\nabla(V(t)),W(t)\rb$ satisfies $f'(t)^2\leq f(t)^2\cdot k_\Soul(\gamma(t))$ for all $t\in\R$.  Since $f(0)=0$, it is a simple calculus exercise to conclude that $f(t)=0$ for all $t\in\R$.  Piecewise geodesic loops at $p$ generate the normal holonomy group at $p$.  Thus, if $\lb R^\nabla(V),W\rb =0$ then $\lb R^\nabla(\Phi(V)),\Phi(W)\rb=0$ for every element, $\Phi$, in the normal holonomy group.

Since $R^\nabla$ at $p$ is a skew-symmetric endomorphism of $\nu_p(\Soul)$, we can decompose $\nu_p(\Soul)=\sigma_1\oplus\sigma_2$, where $\sigma_1$ and $\sigma_2$ are $R^\nabla$-invariant 2-dimensional subspaces, so that $\lb R^\nabla(V),W\rb=0$ for all $V\in\sigma_1$ and $W\in\sigma_2$.  This splitting is unique unless $R^\nabla_p=0$ (in which case the inequality implies that $R^\nabla=0$ at every point, so the connection is flat). In any case, we can conclude that $\sigma_1$ and $\sigma_2$ extend via parallel transport to well-defined global $\nabla$-invariant sub-bundles of $\nu(\Soul)$.
\end{proof}

The soul inequality forces additional rigidity beyond Proposition~\ref{conhol}.  Write $$\nu(\Soul) = \nu_1(\Soul)\oplus\nu_2(\Soul)$$ for the $\nabla$-invariant splitting of $\nu(\Soul)$ into a pair of $\R^2$-bundles.  For $i\in\{1,2\}$, define $F_i:\Soul\ra\R$ as $F_i(p) = \lb R^\nabla(V_i),W_i\rb$, where $\{V_i,W_i\}$ is an oriented orthonormal basis of $(\nu_i)_p(\Soul)$.  Since every $R^2$-bundle over $S^2$ is oriented, these functions are globally well-defined.

If $F_2$ vanished at a single point of $\Soul$, then the previous proof implies that it would vanish everywhere, so $\nu_2(\Soul)$ would be the trivial bundle with a flat connection.  Assuming this is not the case, we prove now that $F_1$ is a multiple of $F_2$.
\begin{prop}\label{whitlem} If $\nu_2(\Soul)$ is not flat, then there exists a constant $\lambda\in\R$ such that $F_1=\lambda F_2$.
\end{prop}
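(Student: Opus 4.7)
The plan is to mimic the ODE argument from the proof of Proposition~\ref{conhol}, but to apply the soul inequality (\ref{soul24}) to parallel sections that mix the two sub-bundles in such a way that $F_1-\lambda F_2$ becomes a conserved quantity along curves. Since $\nu_2(\Soul)$ is not flat, $F_2\not\equiv 0$, and the ODE argument already used in the proof of Proposition~\ref{conhol} (applied with $V=V_2$, $W=W_2$ in $\nu_2$) forces $F_2$ to vanish nowhere on $\Soul$. Fix $p_0\in\Soul$ and set $\lambda := F_1(p_0)/F_2(p_0)$; the goal is to prove $F_1\equiv\lambda F_2$ on $\Soul\cong S^2$.

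The first step is bookkeeping. Fix a unit-speed piecewise geodesic $\gamma:[0,T]\to\Soul$ starting at $p_0$, and along $\gamma$ parallel-transport oriented orthonormal bases $\{V_i,W_i\}$ of $(\nu_i)_{p_0}(\Soul)$ to oriented orthonormal frames $\{V_i(t),W_i(t)\}$ of $\nu_i(\Soul)$ (possible because $\nabla$ preserves the splitting and the orientations). Skew-symmetry of $R^\nabla$ together with $\nabla$-invariance of each $\nu_i(\Soul)$ gives $R^\nabla V_i(t)=F_i(\gamma(t))\,W_i(t)$, and applying $\nabla_{\gamma'}$ yields
$$(D_{\gamma'(t)}R^\nabla)(V_i(t)) = \tfrac{d}{dt}\bigl(F_i(\gamma(t))\bigr)\,W_i(t).$$

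The key step is then the choice of test vectors. Consider the parallel sections $V(t):=V_1(t)+V_2(t)$ and $W(t):=W_1(t)-\lambda W_2(t)$ and set
$$f(t):=\langle R^\nabla V(t),W(t)\rangle = F_1(\gamma(t))-\lambda F_2(\gamma(t)),$$
so that $f(0)=0$ by the choice of $\lambda$. The previous step identifies $f'(t)$ with $\langle (D_{\gamma'}R^\nabla)(V(t)),W(t)\rangle$, and since (\ref{soul24}) is homogeneous of degree two in each of $V$ and $W$, the non-unit lengths of $V(t),W(t)$ are harmless; (\ref{soul24}) becomes
$$f'(t)^2 \leq f(t)^2\cdot k_\Soul(\gamma(t)).$$
The same calculus lemma used in the proof of Proposition~\ref{conhol} (valid because $k_\Soul$ is bounded on the compact soul $S^2$) then forces $f\equiv 0$ along $\gamma$, and since any point of $\Soul$ is reachable from $p_0$ by a piecewise geodesic, this gives $F_1=\lambda F_2$ everywhere.

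I do not expect a serious obstacle: the real content is the observation that $DR^\nabla$ acts block-diagonally on the parallel splitting (so it merely differentiates the scalar coefficients $F_i$), together with the tailored off-diagonal combination $W = W_1-\lambda W_2$ that normalizes $f(0)=0$. Once these are in place, the scalar ODE inequality does the rest and no local-to-global issue arises because $\Soul$ is connected.
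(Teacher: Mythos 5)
Your proposal is correct, but it takes a genuinely different route from the paper's proof. The paper argues pointwise: it observes that the inequality $(c_1(XF_1)+c_2(XF_2))^2\leq(c_1F_1+c_2F_2)^2\,k_\Soul$ must hold for arbitrary scalars $c_1,c_2$ (since $c_i=V_i\wedge W_i$ can be any real number), chooses $c_2=-c_1F_1/F_2$ to annihilate the right side and hence the left, deduces $XF_1/F_1=XF_2/F_2$, and then simply computes that $X(F_1/F_2)=0$ at every point — no ODE is integrated. You instead fix $\lambda$ by its value at one base point, build the tailored combination $V=V_1+V_2$, $W=W_1-\lambda W_2$ along a piecewise geodesic, and rerun the Gronwall-type argument from the proof of Proposition~\ref{conhol} to show $F_1-\lambda F_2\equiv 0$. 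Both are valid. Yours has the conceptual virtue of being a direct reuse of the Proposition~\ref{conhol} machinery (showing that a quantity vanishing at one point must vanish identically), while the paper's has the advantage of being a purely local calculation, avoiding the need to chain the estimate along piecewise geodesics and to invoke boundedness of $k_\Soul$ on the compact soul. The ingredients you use — that $R^\nabla$ and $D_XR^\nabla$ act block-diagonally on the parallel splitting $\nu_1\oplus\nu_2$, that parallel transport preserves the oriented orthonormal frames of each $\nu_i$, and that the nowhere-vanishing of $F_2$ follows from the argument already recorded just above the statement — are all correctly identified, and the homogeneity remark disposing of the non-unit lengths of $V(t),W(t)$ is appropriate even though the paper's inequality (\ref{soul24}) is in any case stated for arbitrary $V,W$.
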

\begin{proof}
Let $p\in\Soul$, let $V,W\in\nu_p(\Soul)$ and let $X\in T_p\Soul$.  Assume $X$ is unit-length.  Decompose $V=V_1+V_2$ and $W=W_1+W_2$, where $V_i,W_i\in(\nu_i)_p(\Soul)$.  Inequality~\ref{soul24} says:
$$\lb (D_XR^\nabla)(V_1+V_2),W_1+W_2\rb^2 \leq \lb R^\nabla(V_1+V_2),W_1+W_2\rb^2\cdot k_\Soul,$$
which simplifies to:
$$\left(\lb (D_XR^\nabla)(V_1),W_1\rb + \lb (D_XR^\nabla)(V_2),W_2\rb\right) ^2
   \leq\left( \lb R^\nabla(V_1),W_1\rb + \lb R^\nabla(V_2),W_2\rb \right)^2\cdot k_\Soul.$$
Letting $c_i = V_i\wedge W_i$, this becomes:
$$\left( c_1 (XF_1) + c_2(XF_2)\right)^2\leq\left(c_1F_1 + c_2F_2\right)^2\cdot k_\Soul.$$
This inequality is valid for all $c_1,c_2$, since the vectors $V_1,W_1,V_2,W_2$ were arbitrary.  In particular, the choice $c_2 = -c_1\frac{F_1}{F_2}$ makes the right side vanish, and thus must make the left side vanish as well, which implies that
$\frac{XF_1}{F_1} = \frac{XF_2}{F_2}.$  The function $\lambda:=\frac{F_1}{F_2}$ must therefore be constant on $\Soul$, since its derivative is
$$X\lambda = \frac{F_2(XF_1) - F_1(XF_2)}{F_2^2} = \frac{0}{F_2^2} = 0.$$
\end{proof}

Since $\int_\Soul F_i$ is a topological invariant of $\nu_i(\Soul)$, the constant $\lambda$ is completely determined by the Euler classes of $\nu_1(\Soul)$ and $\nu_2(\Soul)$.  For example, if $\nu_1(\Soul)$ and $\nu_2(\Soul)$ have the same Euler classes, then $\lambda=1$.

No further restrictions can be obtained from the soul inequality.  Given a metric on $S^2$ and a connection on $S^2\times\R^4$ which satisfy the conclusions of Propositions~\ref{conhol} and Proposition~\ref{whitlem}, the soul inequality will be satisfied, provided it is separately satisfied in $\nu_1(\Soul)$ and $\nu_2(\Soul)$.

It was observed in~\cite{SW} that the total space of each nontrivial $\R^2$-bundle over $S^2$ admits a large family of nonnegatively curved connection metrics.  One obtains the simplest metric on the $k^{\text{th}}$ topological bundle type as a submersion metric of the form:
$$M_k = \left((S^3,\text{round})\times(\C,g^f)\right)/S^1,$$
where $S^1$ acts on $S^3\times\C$ as: $$e^{i\theta}\star(p,V) = (e^{i\theta}\cdot p,e^{i\theta k}\cdot V),$$ and $g^f$ is an $S^1$-invariant (that is, rotationally symmetric) metric on $\C\cong\R^2$.  The integer $k$ determines the Euler class of the resulting bundle.  For this submersion metric, the connection in the normal bundle of the soul has a parallel curvature tensor, so $DR^\nabla=0$.  One obtains a larger family of nonnegatively curved connection metrics from this one by performing (sufficiently small) arbitrary  perturbations to this connection.

The only known examples of connection metrics with nonnegative curvature on $\R^4$-bundles over $S^2$ are the submersion metrics of the form:
$$M_{(k_1,k_2)} = \left((S^3,\text{round})\times(\C,g^f_1)\times(\C,g^f_2)\right)/S^1,$$
where $S^1$ acts on $S^3\times\C\times\C$ as: $$e^{i\theta}\star(p,V_1,V_2) = (e^{i\theta}\cdot p,e^{i\theta k_1}\cdot V_1,e^{i\theta k_2}\cdot V_2),$$ and $g^f_1$ and $g^f_2$ are $S^1$-invariant (that is, rotationally symmetric) metrics on $\C\cong\R^2$.  The integers $k_1$ and $k_2$ determine the Euler classes of the resulting bundles $\nu_1(\Soul)$ and $\nu_2(\Soul)$.  For this submersion metric, $F_1$ and $F_2$ are constant functions.  Topologically, $M_{(k_1,k_2)}$ equals the Whitney sum of $M_{k_1}$ and $M_{k_2}$.  It is useful to observe:
\begin{lem}\label{SAM}
$M_{(k_1,k_2)}$ is the trivial bundle if and only if $k_1\equiv k_2(\text{mod}\,2)$.
\end{lem}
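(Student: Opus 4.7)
The plan is to view $M_{(k_1,k_2)}$ as an oriented rank-$4$ bundle over $S^2$, which (since $\pi_1(SO(4))=\Z/2$) is classified by a single $\Z/2$ invariant, and to compute this invariant directly using the double cover $\mathrm{Spin}(4)=S^3\times S^3\to SO(4)$.

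By construction, $M_{(k_1,k_2)}$ is associated to the Hopf bundle $S^3\to S^2$ via the representation $e^{i\theta}\mapsto\operatorname{diag}(e^{ik_1\theta},e^{ik_2\theta})\in U(1)\times U(1)\subset SO(4)$, so its clutching loop in $SO(4)$ over the standard two-disk cover of $S^2$ is $\gamma(\theta):=\operatorname{diag}(e^{ik_1\theta},e^{ik_2\theta})$ for $\theta\in[0,2\pi]$. The bundle is trivial if and only if $[\gamma]=0\in\pi_1(SO(4))$. To evaluate $[\gamma]$, I would use the double cover $\rho:\mathrm{Spin}(4)\to SO(4)$ given by $(p,q)\cdot x=pxq^{-1}$ on $\HH\cong\R^4$. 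Writing $x=z_1+z_2\jj\in\C\oplus\C\jj$, a short quaternionic calculation yields $\rho(e^{is},e^{it})\cdot x=e^{i(s-t)}z_1+e^{i(s+t)}z_2\jj$, so $\gamma(\theta)$ lifts to
$$\widetilde\gamma(\theta)=\bigl(e^{i(k_1+k_2)\theta/2},\,e^{i(k_2-k_1)\theta/2}\bigr)\in S^3\times S^3.$$
At $\theta=2\pi$ this equals $(1,1)$ when $k_1+k_2$ is even and $(-1,-1)$, the nontrivial kernel element of $\rho$, when $k_1+k_2$ is odd. Hence $[\gamma]=0$ if and only if $k_1\equiv k_2\pmod 2$, which is the lemma.

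The only real work is the quaternionic lifting step that identifies the spin cover of the maximal torus $U(1)\times U(1)\subset SO(4)$; everything else is routine. As a sanity check the argument can be repackaged via Stiefel--Whitney classes: the Whitney sum decomposition noted just before the lemma gives $M_{(k_1,k_2)}=L_{k_1}\oplus L_{k_2}$ with each $L_{k_i}$ a complex line bundle, so $w_1(L_{k_i})=0$ and $w_2(L_{k_i})\equiv c_1(L_{k_i})\equiv k_i\pmod 2$; by the Whitney product formula $w_2(M_{(k_1,k_2)})\equiv k_1+k_2\pmod 2$, and since oriented $\R^4$-bundles over $S^2$ are classified by $w_2$ this yields the same conclusion.
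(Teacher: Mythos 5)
Your main argument is the same as the paper's: write down the clutching loop in the maximal torus of $SO(4)$ and lift it to $\mathrm{Spin}(4)=S^3\times S^3$, observing that the lift closes up precisely when $k_1\equiv k_2\pmod 2$; you carry out the quaternionic lift $\rho(e^{is},e^{it})\cdot(z_1+z_2\jj)=e^{i(s-t)}z_1+e^{i(s+t)}z_2\jj$ explicitly, whereas the paper just records the derivative of the covering map between the two maximal tori as $(a,b)\mapsto(a+b,a-b)$, but these are the same computation. Your closing remark via $w_2$ is a genuinely different (and tidier) route: it replaces the explicit lift with the Whitney sum formula $w_2(L_{k_1}\oplus L_{k_2})\equiv k_1+k_2\pmod 2$ and the fact that $w_2$ is a complete invariant for oriented rank-$\geq 3$ bundles over $S^2$, trading a quaternionic calculation for standard characteristic-class machinery.
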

\begin{proof}
This $R^4$-bundle over $S^2$ is topologically classified by the homotopy class of its ``clutching map'' $\alpha:S^1\ra SO(4)$.  The domain of $\alpha$ is the equator of the base space, the range of $\alpha$ is the space of orthogonal maps from the fiber over the north pole to the fiber over the south pole, and the definition of $\alpha$ is $\alpha(p)$ = parallel transport along the great half-circle through $p$.  The image of $\alpha$ lies in the standard maximal torus of $SO(4)$, and is homotopic to the standard $(k_1,k_2)$-torus knot in $T^2\cong S^1\times S^1$ defined as $\alpha(t) = (e^{ik_1t},e^{ik_2t})$ with $t\in[0,2\pi]$.

We claim that $\alpha$ is nulhomotopic in $SO(4)$ if and only if $k_1\equiv k_2 (\text{mod}\,2)$.  To see this, let $\overline{\alpha}$ be a lift of $\alpha$ to the universal cover, $S^3\times S^3$, of $SO(4)$, which lies in the standard maximal torus of $S^3\times S^3$.  The derivative at the identity of the covering map between the two standard maximal tori is:  $(a,b) \mapsto (a+b,a-b)$.  Since $\alpha'(0) = (k_1,k_2)$, we know that $\overline{\alpha}'(0) = (M,N)$ with $M=(k_1+k_2)/2$  and $N=(k_1-k_2)/2$.  Thus, $\overline{\alpha}(t) = (e^{iMt},e^{iNt})$, which has period $2\pi$ (and hence closes up) if and only if $M$ and $N$ are integers, which happens if and only if $k_1\equiv k_2 (\text{mod}\,2)$.
\end{proof}

These known examples of connection metrics with nonnegative curvature on $S^2\times\R^4$ all have parallel curvature tensors.  From the above discussion, there are large families of connection metrics with non-parallel curvature tensors which satisfy the soul inequality, and it is not known whether these can be constructed to have nonnegative curvature.


\section{Wilking's metric on $S^2\times S^3$}
In this section, we summarize Wilking's construction of metrics with almost positive curvature on certain homogeneous spaces, particularly on $S^2\times S^3$.

In general, a homogeneous space $M=H\bs G$ can always be re-described as a bi-quotient of $G\times G$ as follows:
$$M=G\bs H = \Delta(G)\bs(G\times G)/(1\times H),$$
where $\Delta(G)=\{(g,g)\mid g\in G\}\subset G\times G$ denotes the diagonal.  In other words, $M$ is the quotient of $G\times G$ under the action of $\Delta(G)\times (1\times H)\cong G\times H$ defined as:
$$(g,h)\star(g_1,g_2) = (g\cdot g_1,g\cdot g_2\cdot h^{-1}).$$
The diffeomorphism from $\Delta(G)\bs(G\times G)/(1\times H)$ to $H\bs G$ sends:
\begin{equation}[g_1,g_2]\mapsto [g_2^{-1}g_1],
\label{bihom}\end{equation}
with brackets denoting equivalence classes (orbits).

The advantage of this biquotient description of $M$ is the large variety of Riemannian submersion metrics which it induces.  Any metric on $G\times G$ which is invariant under this action of $G\times H$ induces a Riemannian submersion metric on $M$.  Generally, there is a large family of such metrics on $G\times G$ which have nonnegative curvature.  Wilking discovered many examples for which the induced Riemannian submersion metric on $M$ has positive curvature almost everywhere.

His simplest such example used $G=S^3\times S^3$ and $H=S^1$ (embedded diagonally in $G$), so that topologically,
\begin{equation}M=G/H\cong T^1S^3\cong S^3\times S^2.\label{gif}\end{equation}
The biquotient description is:
\begin{eqnarray*} M = \Delta(S^3\times S^3)\bs\left((S^3\times S^3)\times(S^3\times S^3)\right)/1\times S^1.\end{eqnarray*}
The metric Wilking chose on $(S^3\times S^3)\times(S^3\times S^3)$ was a product metric, $g_L\times g_L$, where
$g_L$ is the left-invariant and right-$\Delta(S^3)$-invariant metric on $G=S^3\times S^3$ constructed as follows:
\begin{equation}(S^3\times S^3,g_L) = \left((S^3,g_0)\times(S^3,g_0)\times(S^3,g_0)\right)/S^3.\label{gl1}\end{equation}
Here, $g_0$ is bi-invariant, and $S^3$ acts by right multiplication on each of the three factors.
Notice that this quotient is diffeomorphic to $S^3\times S^3$ via:
\begin{equation}[p,v,a] \mapsto(p\cdot a^{-1},v\cdot a^{-1}).\label{yaya1}\end{equation}  Thus, $g_L$ is defined as the push-forward via this diffeomorphism of the Riemannian submersion metric on the above quotient.

In summary, $M$ is defined as the quotient of $\hat{M}:=(S^3\times S^3,g_L)\times(S^3\times S^3,g_L)$ under the action of $\hat{G}:=S^3\times S^3\times S^1$ defined as follows:
$$(g_1,g_2,\sigma)\star(a,v,b,c) = (g_1\cdot a, g_2\cdot v, g_1\cdot b\cdot\sigma^{-1}, g_2\cdot c\cdot\sigma^{-1}),$$
with the induced Riemannian submersion metric.

An explicit diffeomorphism from $M=\hat{M}/\hat{G}$ to $S^2\times S^3$ is obtained using Equation~\ref{bihom} and an explicit formula for the identification in Equation~\ref{gif} as follows:
\begin{alignat}{3}
\hat{M}/\hat{G}\,\,\,\,\,\,\,\,\,          &\cong     & \,\,\,\,\,\,\,S^1\bs(S^3\times S^3) \,\,\,\,\,\,\,\,      &\cong    & (S^1\bs S^3)\times S^3 \,\,\,\,\,\,\,\,\,\,\,\,\,\, \notag \\
               &             &                   [x,y]\,\,\,\,\,\,\,\,\,\,\,\,\,\,\,   &\mapsto & ([x],x^{-1}y)\,\,\,\,\,\,\,\,\,\,\,\,\,\,\, \notag\\
[a,v,b,c] \,\,\,\,\,&\mapsto & [b^{-1}\cdot a, c^{-1} v]\,\,\,\,\,\,\, &\mapsto & \,\,\,\,\,\,\,([b^{-1}\cdot a], a^{-1}\cdot b\cdot c^{-1}\cdot v).\label{diffeos1}
\end{alignat}

Since $(S^3\times S^3,g_L)$ is itself a quotient of $S^3\times S^3\times S^3$, we can re-express the space $M$ as a quotient of the space
$$\overline{M}=(S^3\times S^3\times S^3)\times(S^3\times S^3\times S^3)$$
(with the product metric in which each $S^3$ has a unit-round metric).  More precisely, $M$ is the quotient of $\overline{M}$ by the free action of the Lie group $\overline{G}=S^3\times S^3\times S^3\times S^3\times S^1$ defined as follows:
$$(g_1,g_2,s,t,\sigma)\star(a,v,x,b,c,y) = (g_1as^{-1},g_2vs^{-1},xs^{-1},g_1bt^{-1},g_2ct^{-1},\sigma y t^{-1}),$$
with the induced Riemannian submersion metric.  This is called the ``normal biquotient'' description of $M$.
An explicit diffeomorphism from $\overline{M}/\overline{G}$ to $S^2\times S^3$ is obtained by combining Equation~\ref{yaya1} and~\ref{diffeos1}, as follows:
\begin{alignat}{3}
\overline{M}/\overline{G} \,\,\,\,\,         &\,\,\,\cong\,\,\,     & \hat{M}/\hat{G}\hspace{.3in}\,\,\,\,\,      &\,\,\,\cong \,   & (S^1\bs S^3)\times S^3 \hspace{.3in}\label{secondlong}\\
 [a,v,x,b,c,y]              &  \,\,\,\mapsto\,\,\,       &     [ax^{-1}, vx^{-1},by^{-1},cy^{-1}]   &\,\,\,\mapsto \,\,\, & ([yb^{-1}ax^{-1}],xa^{-1}bc^{-1}vx^{-1}).\notag
\end{alignat}

\section{Extending Wilking's metric to $S^2\times\HH$}
In this section, we extend Wilking's metric on $S^2\times S^3$ to a metric on the trivial vector bundle $S^2\times\HH$.

We first establish notation related the quaternions, $\HH=\text{span}\{1,\ii,\jj,\kk\}$.  We will always consider $S^3$ as the group of unit-length elements of $\HH$, with circle-subgroup $S^1=\{e^{\ii t}\mid t\in\R\}\subset S^3\subset\HH$.  If $v\in\HH$, then $L_v,R_v:\HH\ra\HH$ will denote the left and right multiplication maps.  If $v\in S^3$, then we denote $\Ad_v = L_v\circ R_{v^{-1}}$, which restricts to $\text{Im}(\HH)\cong sp(1)$ as the usual adjoint action.  The real and imaginary parts of $v\in\HH$ will be denoted as $\text{Re}(v)$ and $\text{Im}(v)$.  The conjugate will be denoted as $\overline{v}:=\text{Re}(v)-\text{Im}(v)$.
Finally, recall that the (real) inner product of $q_1,q_2\in\HH$ is:
\begin{equation}\label{meatball}\lb q_1,q_2\rb = \text{Re}(q_1\overline{q}_2) = \text{Re}(\overline{q}_1q_2) = \text{Re}(q_2\overline{q}_1) =\text{Re}(\overline{q}_2q_1).\end{equation}

We will modify the definition of $M$ from the previous chapter by replacing one occurrence of ``$S^3$'' with ``$\HH$''.  The non-normal description of this modification is:
$$M=\Delta(S^3\times S^3)\bs\left((S^3\times\HH,g_L')\times(S^3\times S^3,g_L)\right)/1\times S^1,$$
where $g_L'$ is defined like $g_L$ by replacing one occurrence of ``$(S^3,g_0)$'' with ``$(\HH,\text{flat})$'' in Equation~\ref{gl1}

However, all future calculations will instead be done using the equivalent modification of the normal description of $M$.  That is, we define
$$\overline{M}=(S^3\times\HH\times S^3)\times(S^3\times S^3\times S^3),$$
with the product metric in which each $S^3$ has a unit-round metric and $\HH\cong\R^4$ has a flat metric.  Then we define $M$ to be the quotient of $\overline{M}$ by the action of $\overline{G}=S^3\times S^3\times S^3\times S^3\times S^1$ defined as follows:
\begin{equation}(g_1,g_2,s,t,\sigma)\star(a,v,x,b,c,y) = (g_1as^{-1},g_2vs^{-1},xs^{-1},g_1bt^{-1},g_2ct^{-1},\sigma y t^{-1}),\label{action}\end{equation}
with the induced metric which makes the projection $\pi:\overline{M}\ra M$ become a Riemannian submersion.

Define $\Sigma:=\{[a,v,x,b,c,y]\in M\mid v=0\}$, which is the soul of $M$.  The Sharafutdinov projection, $\text{sh}:M\ra\Sigma$, is the map which send $[a,v,x,b,c,y]\mapsto[a,0,x,b,c,y]$.  Notice that Wilking's space (described in the previous chapter) is isometric to the sphere of radius $1$ about $\Soul$ in $M$.  Let $\nu(\Soul)$ denote the normal bundle of $\Soul$ in $M$.  If $p\in\Soul$, then let $\nu_p(\Soul)$ denote its fiber at $p$.

Exactly as in Equation~\ref{secondlong}, an explicit diffeomorphism from $M=\overline{M}/\overline{G}$ to $(S^1\bs S^3)\times\HH\cong S^2\times\HH$ is given by:
\begin{equation} [a,v,x,b,c,y]  \mapsto ([yb^{-1}ax^{-1}],xa^{-1}bc^{-1}vx^{-1}).\label{diffeoo}\end{equation}

%
Let $\V$ and $\Hor$ denote the vertical and horizontal distributions of the Riemannian submersion $\pi:\overline{M}\ra M$.  It is straightforward to verify that:
\begin{lem}\label{Hq0} At the point $\overline{q}_0=(1,0,1,1,1,1)\in\overline{M}$, the horizontal space is:
$$\Hor_{\overline{q}_0} = \{(A,B,-A,-A,0,A)\mid A\in\text{span}\{\jj,\kk\}\subset\HH, B\in\HH\}.$$
\end{lem}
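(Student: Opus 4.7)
The plan is a direct infinitesimal calculation: I would determine the vertical space $\V_{\overline{q}_0}$ by differentiating the $\overline{G}$-orbit map through $\overline{q}_0$, then characterize $\Hor_{\overline{q}_0}$ as its orthogonal complement in the product metric on $\overline{M}$.

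First I would identify $\text{Lie}(\overline{G}) \cong \text{Im}(\HH)^{\oplus 4} \oplus \R\ii$ (three copies of $\mathfrak{sp}(1)$ for the four $S^3$-factors of $\overline{G}$, plus the line $\R\ii$ for the $S^1$-factor). Applying the one-parameter subgroup $(e^{t\alpha_1}, e^{t\alpha_2}, e^{t\sigma_s}, e^{t\sigma_t}, e^{t\zeta})$ to $\overline{q}_0 = (1,0,1,1,1,1)$ via the action formula~(\ref{action}) and differentiating component-wise at $t=0$ yields the vertical vector
$$(\alpha_1 - \sigma_s,\; 0,\; -\sigma_s,\; \alpha_1 - \sigma_t,\; \alpha_2 - \sigma_t,\; \zeta - \sigma_t).$$
The crucial observation is that the $\HH$-entry is automatically zero: because $v = 0$ at $\overline{q}_0$, both $g_2 \cdot v$ and $v \cdot s^{-1}$ are constant in $t$. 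This alone shows that the full $\HH$-summand at the second coordinate lies in $\Hor_{\overline{q}_0}$, which is where the free parameter $B \in \HH$ will live.

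Next I would take an arbitrary tangent vector $(V_1, V_2, V_3, V_4, V_5, V_6)$ with $V_2 \in \HH$ and the other $V_i \in \text{Im}(\HH)$, and impose orthogonality against every vertical vector. Since the bi-invariant metric on each $S^3$-factor restricts to the standard inner product on $\text{Im}(\HH)$ at the identity, separately collecting the coefficients of the independent parameters $\alpha_1, \alpha_2, \sigma_s, \sigma_t, \zeta$ yields the five linear conditions $V_1 + V_4 = 0$, $V_5 = 0$, $V_1 + V_3 = 0$, $V_4 + V_5 + V_6 = 0$, and $\langle V_6, \ii \rangle = 0$. Solving gives $V_3 = V_4 = -V_1$, $V_5 = 0$, $V_6 = V_1$, and $V_1 \perp \ii$; setting $A := V_1 \in \text{span}\{\jj,\kk\}$ and $B := V_2 \in \HH$ recovers the stated parametrization.

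I do not anticipate any substantive obstacle: this is a standard orbit-and-complement computation. The only subtlety worth flagging is that the $S^1$-factor contributes only the direction $\R\ii$ rather than all of $\text{Im}(\HH)$, and this is exactly what forces $V_1 \perp \ii$, excluding $\ii$ from $A$ while still allowing $\jj$ and $\kk$. A dimension tally is a final sanity check: the $\overline{G}$-action is free (since $\pi$ is a Riemannian submersion onto a manifold), so $\dim \V_{\overline{q}_0} = \dim \overline{G} = 13$, hence $\dim \Hor_{\overline{q}_0} = 19 - 13 = 6$, matching the $2 + 4$ parameters carried by $(A, B)$.
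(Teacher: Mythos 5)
Your computation is correct and matches what the paper implicitly intends (the paper simply asserts the lemma is ``straightforward to verify,'' and later proves the general $r>0$ horizontal-space formula by exactly this method: identify $\V$ with the span of the action fields, check orthogonality, and confirm the dimension count $19-13=6$). The only minor slip is the justification of freeness---appealing to ``$\pi$ is a Riemannian submersion onto a manifold'' is circular, but you can instead check directly that the isotropy at $\overline{q}_0=(1,0,1,1,1,1)$ is trivial (the third slot forces $s=1$, hence $g_1=1$; the fourth forces $t=1$; the fifth forces $g_2=1$; the sixth forces $\sigma=1$), and in any case your explicit solution of the five orthogonality equations already establishes the result without needing the dimension tally.
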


Later, we will describe $\Hor$ at an arbitrary point of $\overline{M}$, but first we will establish several consequences of this special case.
\section{Isometries of $M$}
The group $S^3$ acts by isometries on $M$, with $s\in S^3$ acting as:
\begin{equation}s\star[a,v,x,b,c,y] = [a,v,sx,b,c,y].\label{symmetry}\end{equation}
If $M$ is identified with $(S^1\bs S^3)\times\HH$ via diffeomorphism~\ref{diffeoo}, then this isometric $S^3$-action on $M$ looks like:
\begin{equation}\label{S3identify}s\star([a],v) = ([as^{-1}],svs^{-1}).\end{equation}
This action restricts to the soul, $\Soul\cong\{([q],0)\mid q\in S^3\}$, as the standard transitive action of $S^3$ on $S^2$, so the soul is homogeneous, and therefore round.  In fact,
\begin{lem}\label{curvsoul}  The soul, $\Soul$, of $M$ has constant curvature $16$.
\end{lem}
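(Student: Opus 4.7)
My plan is to exploit the homogeneity of $\Soul$ to reduce the proof to a single norm computation. By Equation~\ref{S3identify}, $S^3$ acts isometrically and transitively on $\Soul$ with isotropy $S^1=\{e^{t\ii}\}$ at $q_0=([1],0)$, so $\Soul$ is a homogeneous surface, its sectional curvature is constant, and its metric is a right-$S^3$-invariant metric on $S^1\bs S^3$. Up to a positive scalar, there is only one such metric: the canonical Hopf quotient of the unit round $3$-sphere, which is the round $S^2$ of radius $\tfrac{1}{2}$ and constant curvature $4$. Thus the proof reduces to determining the scaling factor between the soul metric and this canonical one.

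To determine this scaling factor I will compare the norm at $q_0$ of the Killing field $K_\jj$ generated by $\jj$. In the canonical Hopf metric on $S^1\bs S^3$, the horizontal lift of $K_\jj$ at $[1]$ is $-\jj\in T_1 S^3$ (the horizontal complement of the fiber direction $\ii$), which has unit length in the round metric, so $|K_\jj|_{\text{canonical}}=1$. In the soul metric, Equation~\ref{symmetry} shows that $K_\jj$ at $q_0$ equals $\pi_*(0,0,\jj,0,0,0)$, so its length equals the norm of the horizontal projection of $(0,0,\jj,0,0,0)\in T_{\overline{q}_0}\overline{M}$.

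To compute this horizontal projection, I will decompose the vector using Lemma~\ref{Hq0} together with the infinitesimal form of the action~(\ref{action}): each Lie algebra element $(\xi_1,\xi_2,\xi_3,\xi_4,\xi_5)$ of $\overline{G}$ produces, at $\overline{q}_0$, the vertical vector $(\xi_1-\xi_3,0,-\xi_3,\xi_1-\xi_4,\xi_2-\xi_4,\xi_5-\xi_4)$. Solving the resulting linear system under the constraint $\xi_5\in\R\ii$ (since the last factor of $\overline{G}$ is $S^1$) forces the horizontal component to equal $(-\jj/4,0,\jj/4,\jj/4,0,-\jj/4)$, whose squared norm in the product metric on $\overline{M}$ is $4\cdot(1/16)=1/4$. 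Hence $|K_\jj|_{\text{soul}}=1/2$.

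The soul-metric norm is therefore half the canonical one, so the soul metric tensor equals $\tfrac{1}{4}$ of the canonical one, and sectional curvatures are multiplied by $4$; this gives constant curvature $16$. The only delicate point in the argument is the linear-algebra step above, where the $\R\ii$ constraint on $\xi_5$ is exactly what pins down the coefficient $A=-\jj/4$ appearing in the horizontal form from Lemma~\ref{Hq0}, and hence determines the scale. A more direct O'Neill attack on $\pi\colon\overline{M}\to M$ is available in principle but is less clean, because the $\overline{G}$-action is biquotient rather than by right multiplication of a subgroup, so left-invariant extensions of horizontal vectors need not remain horizontal.
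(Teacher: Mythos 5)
Your proposal is correct, and the core numerical computation coincides with the paper's, though the surrounding reasoning differs slightly. The paper's proof also begins from homogeneity (so $\Soul$ has constant curvature and its metric is determined up to scale), and also uses Lemma~\ref{Hq0} to produce the relevant horizontal vector at $\overline{q}_0$. The difference lies in how the scale is pinned down: the paper treats that vector as the initial velocity of the horizontal geodesic $\overline{\gamma}(t)=(e^{(t/4)X},0,e^{-(t/4)X},e^{-(t/4)X},1,e^{(t/4)X})$, computes its speed to be $\tfrac 12$, and observes that the projected curve $t\mapsto([e^{tX}],0)$ closes up with period $\pi$; a homogeneous surface whose closed geodesics of speed $\tfrac 12$ have period $\pi$ (hence great-circle length $\pi/2$) has curvature $16$. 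You instead treat the vector (which is exactly the negative of $\overline{\gamma}'(0)$ when $X=\jj$) as the horizontal lift of the Killing field $K_\jj$, compute its norm to be $\tfrac 12$, and compare against the reference value $1$ for the canonical Hopf quotient together with the known curvature $4$ of that quotient. Both arguments therefore hinge on the same norm $\tfrac 12$ coming out of Lemma~\ref{Hq0}. The paper's route is marginally more self-contained (it does not import the Hopf curvature value) and has the side benefit of exhibiting an explicit horizontal geodesic that is reused verbatim in the proof of Proposition~\ref{connectionn}; your route cleanly isolates the homogeneity-and-uniqueness step and replaces the period computation by the standard Hopf fact. Both are valid.
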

\begin{proof} If $X\in\text{span}\{\jj,\kk\}\subset\HH$ has unit-length (with respect to inner product~\ref{meatball}), then the geodesic in $\overline{M}$ given by
$$\overline{\gamma}(t) = (e^{(t/4)X},0,e^{-(t/4)X},e^{-(t/4)X},1,e^{(t/4)X})$$
is initially horizontal by Lemma~\ref{Hq0}, and is therefore everywhere horizontal.  This geodesic has constant speed equal to $|\overline{\gamma}'(0)| = \frac 12$, so the path $\gamma:=\pi\circ\overline{\gamma}$ in $\Soul\subset M$ also has constant speed $\frac 12$.  The image of $\gamma$ in $(S^1\bs S^3)\times\HH$ under diffeomorphism~\ref{diffeoo} is the path $t\mapsto ([e^{tX}],0)$.  The period of $\gamma$ equals the period of this image, which is $\pi$.  Thus, $\Soul$ has a homogeneous metric with a geodesic of speed $\frac 12$ and period $\pi$.
\end{proof}
\section{The normal connection}
In this section, we describe the connection, $\nabla$, in normal bundle, $\nu(\Soul)$, of $\Soul$ in $M$.  We continue to identify $M\cong (S^1\bs S^3)\times\HH$ and $\Soul\cong\{([q],0)\mid q\in S^3\}$ and $q_0=([1],0)\in\Soul$.  Since this identification provides an explicit trivialization of the bundle, we can identify $\nabla$ with its ``connection difference form,'' compared to the trivial flat connection in $(S^1\bs S^3)\times\HH$.  In other words, for each $q\in S^3$, $X\in\T_{([q],0)}\Soul$ and $V\in\nu_{([q],0)}(\Soul)\cong\HH$, the expression $\nabla_X V$ will denotes the covariant derivative at $([q],0)$ of the \emph{constant} extension of $V$ along a path in the direction of $X$.
\begin{prop}\label{connectionn}
$\nabla$ is the unique connection on the trivial bundle $(S^1\bs S^3)\times\HH$ with the following two properties:
\begin{enumerate}
\item For any $X\in T_{q_0}\Soul=\text{span}\{\jj,\kk\}$ and any $V\in\HH\cong\nu_{q_0}(\Soul)$,
 $$\nabla_XV = \frac 34 XV - \frac 14 VX.$$
\item The $S^3$-action of Equation~\ref{S3identify} leaves $\nabla$ invariant, which means that
$$\nabla_{(s\star X)} (s\star V) = s\star(\nabla_X V)$$
for all $q\in S^3$, $X\in T_{([q],0)}\Soul$, $V\in\HH\cong\nu_{([q],0)}(\Soul)$, and $s\in S^3$ (with $s$ acting on vectors via the derivative of the isometry it represents).
\end{enumerate}
\end{prop}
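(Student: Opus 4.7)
The plan is to split the argument into two halves: first, verify that the normal connection $\nabla$ induced from the submersion $\pi\colon\overline M\to M$ satisfies both listed properties; second, show that those two properties together pin down a unique connection on the trivial bundle.

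Property (2) is essentially formal. The $S^3$-action of~\ref{symmetry} is by isometries of $M$ and preserves $\Soul$ setwise; its derivative on normal fibers is the conjugation $V\mapsto sVs^{-1}$ appearing in~\ref{S3identify}. Since $\nabla$ is defined intrinsically from the ambient submersion metric, any isometry of $M$ that preserves $\Soul$ automatically preserves $\nabla$, which is exactly the equivariance required in Property (2).

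The real substance is Property (1), and my plan is to compute the parallel extension of $V$ along $\gamma(t)=([e^{tX}],0)$ explicitly by lifting to $\overline M$. I would work with the horizontal geodesic
$$\bar\gamma(t) = (e^{(t/4)X},0,e^{-(t/4)X},e^{-(t/4)X},1,e^{(t/4)X})$$
already produced in the proof of Lemma~\ref{curvsoul}, and show that the tangent vector $(0,V,0,0,0,0)\in T_{\bar\gamma(t)}\overline M$ is simultaneously (a) horizontal at every $t$, and (b) parallel along $\bar\gamma$ in the product metric on $\overline M$. For (a), I would inspect the five generators of $\overline G$ from~\ref{action}: each of them acts on the $v$-slot through the combination $A_2 v - v A_s$, which vanishes identically when $v=0$, so the whole $v$-coordinate direction is orthogonal to every orbit at all points of $\bar\gamma$. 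Assertion (b) is immediate from the product structure, since a zero vector in each round $S^3$-factor stays zero under parallel transport and a constant vector in the flat $\HH$-factor stays itself. Combining (a), (b) with the fact that $\Soul$ is totally geodesic and with O'Neill's formula, $\pi_*(0,V,0,0,0,0)$ is a parallel section of $\nu(\Soul)$ along $\gamma$; pushing it through the diffeomorphism~\ref{diffeoo} gives the explicit parallel extension $V(t) = e^{-(3/4)tX}\,V\,e^{(1/4)tX}$. Since $\nabla$ is being described as a connection difference form against the trivial flat connection, one has $\nabla_X V = -V'(0)$ for any parallel $V(t)$ with $V(0)=V$, and differentiating yields the claimed formula $\tfrac34 XV-\tfrac14 VX$.

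For uniqueness, the $S^3$-action is transitive on $\Soul\cong S^1\bs S^3 = S^2$, so Property (2) determines any connection satisfying it on all of $\Soul$ from its values at $q_0$. The isotropy at $q_0$ is the circle $S^1=\{e^{i\theta}\}$, which acts on both $T_{q_0}\Soul\cong\text{span}\{\jj,\kk\}$ and $\nu_{q_0}(\Soul)\cong\HH$ by conjugation $Y\mapsto e^{i\theta}Ye^{-i\theta}$; a one-line check shows that the formula $\tfrac34 XV-\tfrac14 VX$ is equivariant under simultaneous conjugation of $X$ and $V$, so the $S^3$-propagation of Property (1) from $q_0$ is well-defined and produces a unique connection. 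The only genuinely nontrivial step in the whole plan is the verification that $(0,V,0,0,0,0)$ remains horizontal along all of $\bar\gamma$ and not merely at $\bar q_0$; everything else is bookkeeping, given the explicit horizontal geodesic of Lemma~\ref{curvsoul} and the trivialization~\ref{diffeoo}.
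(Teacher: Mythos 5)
Your proposal is correct and follows essentially the same route as the paper: both lift to $\overline M$, use the explicit horizontal geodesic $\bar\gamma$ of Lemma~\ref{curvsoul}, and invoke the submersion relation between covariant derivatives of horizontal fields along horizontal geodesics. The only variation is that you compute the $\nabla$-parallel section directly (showing $(0,V,0,0,0,0)$ is horizontal and $\overline M$-parallel along $\bar\gamma$, then pushing through the trivialization to get $V(t)=e^{-\frac34 tX}Ve^{\frac14 tX}$ and taking $\nabla_X V = -V'(0)$), whereas the paper lifts the \emph{constant} section to the horizontal field $\overline{V}(t)=(0,e^{(3t/4)X}ve^{(-t/4)X},0,0,0,0)$ and reads off $\nabla_X V = \pi_*\overline V'(0)$ directly; these are the same computation in mirror image. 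One small presentational caveat: the fact you want when passing from ``horizontal and parallel upstairs'' to ``$\nabla$-parallel downstairs'' is the submersion lemma relating covariant derivatives of basic fields along a horizontal geodesic, not O'Neill's curvature formula per se — worth citing precisely since it is the one genuinely nontrivial input.
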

\begin{proof} Property (2) is obvious because the $S^3$-action is by isometries.  It is straightforward to check that the equation in property (1) is isotropy-invariant, and therefore that it extends to a \emph{well-defined} connection, which is clearly unique.  So it remains to establish property (1).

For this, let $X\in T_{([1],0)}\Soul=\text{span}\{\jj,\kk\}$.  As in the proof of Lemma~\ref{curvsoul}, the horizontal geodesic
$$\overline{\gamma}(t):=\left(e^{(t/4)X},0,e^{(-t/4)X},e^{(-t/4)X},1,e^{(t/4)X}\right)$$
in $\overline{M}$ is such that the geodesic $\gamma=\pi\circ\overline{\gamma}$ in $M$ is identified with the geodesic $\gamma(t)\cong ([e^{tX}],0)$ in $(S^1\bs S^3)\times\HH$.

Next, let $v\in\HH$, and consider the following path in $\overline{M}$:
$$\overline{v}(t):=\left(e^{(t/4)X},e^{(3t/4)X} v e^{(-t/4)X},e^{(-t/4)X},e^{(-t/4)X},1,e^{(t/4)X}\right)$$
Notice that $v(t):=\pi(\overline{v}(t))$ is a path in $M$ which is identified in $(S^1\bs S^3)\times\HH$ with the path $v(t)\cong ([e^{tX}],v)$.

Let $\overline{V}(t)$ be the vector field along $\overline{\gamma}(t)$ in $\overline{M}$ which exponentiates to $\overline{v}(t)$; namely,
$$\overline{V}(t):=\left(0,e^{(3t/4)X} v e^{(-t/4)X},0,0,0,0\right),$$
so that $V(t):=\pi_*(\overline{V}(t))$ is the vector field along $\gamma(t)$ in $M$ which exponentiates to $v(t)$.  Since $\overline{V}(t)$ is a horizontal vector field along a horizontal geodesic, we have:
$$V'(0) = \pi_*\left(\overline{V}'(0)\right)=\pi_*\left(0,\frac 34 Xv - \frac 14 vX,0,0,0,0\right).$$
Under the identification $M\cong (S^1\bs S^3)\times\HH$, this vector $V'(0)\in T_{q_0} M$ is identified with:
$$V'(0)\cong\left(0,\frac 34 Xv - \frac 14 vX\right)\in T_{q_0}\left((S^1\bs S^3)\times\HH\right).$$
\end{proof}
\begin{cor}\label{parallel} If $X\in\text{span}\{\jj,\kk\}$ and $V\in\nu_{q_0}(\Soul)\cong\HH$, then parallel extension of $V$ along the path $\gamma(t)=([e^{tX}],0)$ is:
$$V(t) = e^{-\frac 34 tX}\cdot V\cdot e^{\frac 14 tX} = e^{-tX}\star\left( e^{\frac 14 tX}\cdot V\cdot e^{-\frac 34 tX}\right).$$
\end{cor}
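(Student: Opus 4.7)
The plan is to set $V(t):=e^{-\frac{3}{4}tX}Ve^{\frac{1}{4}tX}$, observe $V(0)=V$, and verify $\nabla_{\gamma'(t)}V(t)=0$ for every $t$; by uniqueness of parallel transport this identifies $V(t)$ with the parallel extension of $V$ along $\gamma$. The second equality in the statement is a routine algebraic check: by Equation~\ref{S3identify} the $S^3$-action on fibers is conjugation, so $e^{-tX}\star(e^{\frac{1}{4}tX}Ve^{-\frac{3}{4}tX})=e^{-tX}\cdot e^{\frac{1}{4}tX}Ve^{-\frac{3}{4}tX}\cdot e^{tX}=e^{-\frac{3}{4}tX}Ve^{\frac{1}{4}tX}$, using only that all exponentials of $X$ commute.

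To verify $\nabla_{\gamma'(t_0)}V(t)|_{t=t_0}=0$ for arbitrary $t_0$, I plan to use Property~(2) of Proposition~\ref{connectionn} to transport the computation to $q_0$. The isometry $\Phi:=e^{t_0X}\star(\cdot)$ sends $\gamma(s+t_0)$ to $\gamma(s)$ (because $e^{sX}$ and $e^{t_0X}$ commute), and pushing $V(s+t_0)$ forward along $\Phi$ yields, by the conjugation formula for the action on fibers, a vector field along $\gamma(s)$ of the form
\[\tilde V(s)=A\cdot\bigl(e^{-\frac{3}{4}sX}Ve^{\frac{1}{4}sX}\bigr)\cdot B,\qquad A:=e^{\frac{1}{4}t_0X},\;B:=e^{-\frac{3}{4}t_0X}.\]
By naturality of $\nabla$ under isometries, it then suffices to show $\nabla_X\tilde V|_{s=0}=0$ at $q_0$.

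At $q_0$ I would split $\nabla_X\tilde V|_{s=0}$ into the ordinary derivative $\tilde V'(0)$ in the trivial frame of $(S^1\bs S^3)\times\HH$, plus the ``connection difference form'' applied to the constant extension of $\tilde V(0)=AVB$, which by Property~(1) of Proposition~\ref{connectionn} equals $\frac{3}{4}X(AVB)-\frac{1}{4}(AVB)X$. The crucial observation is that $A$ and $B$ are power series in $X$ and therefore commute with $X$, so this simplifies to $A\bigl(\frac{3}{4}XV-\frac{1}{4}VX\bigr)B$; a direct differentiation gives $\tilde V'(0)=A\bigl(-\frac{3}{4}XV+\frac{1}{4}VX\bigr)B$; and the two contributions cancel.

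The only real subtlety is careful bookkeeping for how $\Phi$ acts on tangent vectors to $\Soul$ and on fiber vectors under the trivialization $M\cong(S^1\bs S^3)\times\HH$. Once that is set up correctly, the verification at the single point $q_0$ propagates along the whole geodesic $\gamma$ precisely because $X$ commutes with its own exponentials, so the constants $A$ and $B$ pass harmlessly through every term in the computation.
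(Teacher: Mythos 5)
Your proof is correct, and it is the natural way to fill in what the paper treats as an immediate consequence of Proposition~\ref{connectionn} (no written proof is given there): verify $\nabla_{\gamma'}V(t)=0$ at $t=0$ by adding the ordinary derivative $-\tfrac34 XV+\tfrac14 VX$ to the connection difference form $\tfrac34 XV-\tfrac14 VX$, then propagate to all $t_0$ via $S^3$-invariance. Your bookkeeping of the isometry $\Phi=e^{t_0X}\star(\cdot)$ acting on $\gamma$, on fiber vectors by conjugation, and the cancellation using that $A,B$ commute with $X$, is all accurate.
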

\begin{cor} The holonomy group of $\nu(\Soul)$ is isomorphic to $SO(4)$.
\end{cor}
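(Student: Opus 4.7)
The plan is to compute the Lie algebra $\mathfrak{h} \subseteq \mathfrak{so}(\nu_{q_0}(\Soul)) = \mathfrak{so}(4)$ of the holonomy group $H$ and show it equals $\mathfrak{so}(4)$. Since $\Soul \cong S^2$ is simply connected, $H$ is automatically connected, so this suffices. I identify $\mathfrak{so}(4)$ with $\mathfrak{sp}(1)\oplus\mathfrak{sp}(1)$ via the double cover $S^3\times S^3 \to SO(4)$, $(p,q)\cdot V = pVq^{-1}$, with Lie algebra action $(a,b)\cdot V = aV - Vb$.

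Corollary~\ref{parallel} yields a one-parameter family of holonomy elements: for each unit $X\in\text{span}\{\jj,\kk\}$ the geodesic $\gamma_X(t)=([e^{tX}],0)$ closes up at $t=\pi$ (since $e^{\pi X}=-1\in S^1$), and the holonomy $\Phi_X\colon V\mapsto e^{-\frac{3\pi}{4}X}V\,e^{\frac{\pi}{4}X}$ lifts to $A_X := \bigl(-\tfrac{\sqrt 2}{2}(1+X),\,\tfrac{\sqrt 2}{2}(1-X)\bigr)$ in $S^3\times S^3$. Writing $X(\theta)=\cos\theta\,\jj+\sin\theta\,\kk$ and computing the right-invariant derivatives $A'_{X(\theta)}A_{X(\theta)}^{-1}$, I expect the tangent vectors to span (and close under Lie bracket into) the three-dimensional subalgebra
\[
\mathfrak{h}_0 := \text{span}\{(\ii,\ii),\,(\jj,-\jj),\,(\kk,-\kk)\},
\]
whose connected subgroup in $S^3\times S^3$ is $H_0 := \{(p,\ii p\ii^{-1}) : p\in S^3\}$, projecting to a copy of $SO(3)\subset SO(4)$. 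So $\mathfrak{h}\supseteq\mathfrak{h}_0$.

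To show the containment is strict, I check that $\Phi_\jj$ does not lie in the image of $H_0$ in $SO(4)$. A short quaternion computation gives $\ii\cdot\bigl(-\tfrac{\sqrt 2}{2}(1+\jj)\bigr)\cdot\ii^{-1} = -\tfrac{\sqrt 2}{2}(1-\jj)$, which matches neither of the two lifts $\pm A_\jj$ of $\Phi_\jj$ to the double cover. Since $H$ is connected and contains $\Phi_\jj$, if $\mathfrak{h}$ equalled $\mathfrak{h}_0$ then $H$ would have to be the image of $H_0$, a contradiction. Hence $\mathfrak{h} \supsetneq \mathfrak{h}_0$.

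A short structural lemma then finishes: any Lie subalgebra of $\mathfrak{sp}(1)\oplus\mathfrak{sp}(1)$ strictly containing $\mathfrak{h}_0$ must equal $\mathfrak{sp}(1)\oplus\mathfrak{sp}(1)$. Indeed, any extra element yields a nonzero $(0,c)\in\mathfrak{h}$, and the brackets $[(a,\Ad_\ii(a)),(0,c)] = (0,[\Ad_\ii(a),c])$ sweep out all of $(0,\mathfrak{sp}(1))$ (using simplicity of $\mathfrak{sp}(1)$ and that $\Ad_\ii$ is a bijection), which together with $\mathfrak{h}_0$ spans everything. Hence $\mathfrak{h} = \mathfrak{so}(4)$ and $H=SO(4)$. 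The main obstacle is the sign-check excluding $\Phi_\jj$ from the image of $H_0$, which requires careful tracking of the $\pm 1$ ambiguity in the double cover; the Lie-bracket computation for $\mathfrak{h}_0$ and the structural lemma are routine.
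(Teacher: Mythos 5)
Your proof is correct, and it takes a genuinely different route from the paper's. The paper computes the explicit parallel transport maps $P_{\gamma_X}$ along the closed geodesics $\gamma_X(t)=([e^{tX}],0)$ and then argues by elimination: $\Phi$ is not contained in a maximal torus (the $P_{\gamma_X}$-irreducible subspaces rotate with $X$), nor is it an $SO(3)$ or $S^3$ acting with or without a fixed vector; the only remaining possibility is $SO(4)$. Your version instead passes to the Lie algebra $\mathfrak{h}\subset\mathfrak{sp}(1)\oplus\mathfrak{sp}(1)$: differentiating the curve $\theta\mapsto A_{X(\theta)}$ of lifted holonomy elements and right-translating gives $\bigl(\tfrac 12\ii+\tfrac 12 X',\ \tfrac 12\ii-\tfrac 12 X'\bigr)$ (using $X'X=-\ii$), whose span as $X'$ ranges over $\text{span}\{\jj,\kk\}$ is exactly $\mathfrak{h}_0=\text{span}\{(\ii,\ii),(\jj,-\jj),(\kk,-\kk)\}$; the sign check on $\Phi_\jj$ rules out $\mathfrak{h}=\mathfrak{h}_0$; and your structural lemma (correct: any nonzero $(0,c)$ together with brackets against $\mathfrak{h}_0$ fills out $\{0\}\oplus\mathfrak{sp}(1)$ by simplicity of $\mathfrak{sp}(1)$, and $\mathfrak{h}_0$ supplies the rest) forces $\mathfrak{h}=\mathfrak{so}(4)$. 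Two remarks. First, your invocation of simple connectivity of $S^2$ to get connectedness of $H$ is doing real work: the paper's case analysis tacitly assumes connectedness, and it also does not explicitly address the $U(2)$ possibility, which your structural lemma disposes of automatically -- so your argument is arguably tighter. Second, the ``I expect'' hedge is unwarranted; the computation is short and checks out (I verified $X'X=-\ii$ and the resulting span), so you should just carry it out rather than anticipate it.
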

\begin{proof}
Let $X\in\{\jj,\kk\}$, let $Y$ denote the orthogonal compliment of $X$ in $\text{span}\{\jj,\kk\}$ and let $\gamma(t) = ([e^{tX}],0)$, which is a closed geodesic in $\Soul$ with period $\pi$.  The parallel transport map, $P_\gamma$, along one iteration of $\gamma$, is the endomorphism of $\nu_{q_0}(\Soul)\cong\HH$ defined by
$P_\gamma(V) = e^{-\frac 34\pi X}\cdot V\cdot e^{\frac 14\pi X}$.  It is straightforward to check that
$$P_\gamma(1)=-X,\,\,\,\,P_\gamma(X)=1,\,\,\,P_\gamma(\ii)=-\ii,\,\,\,\,P_\gamma(Y)=-Y.$$
In particular, $\text{span}\{1,X\}$ and $\text{span}\{\ii,Y\}$ are the irreducible subspaces of $\HH$.

The holonomy group, $\Phi$, is a subgroup of $SO(4)$.  It is not contained in a maximal torus of $SO(4)$ because the irreducible subspaces for $P_\gamma$ vary with $X$.  Further, $\Phi$ is not isomorphic to $SO(3)$ or $S^3$ acting non-transitively on $\HH$ because $P_\gamma$ does not have any fixed vectors. Finally, $\Phi$ is not isomorphic to $SO(3)$ or $S^3$ acting transitively on $\HH$ because the isotropy groups are too large; for example, there are infinitely many different holonomy elements sending $\ii\mapsto-\ii$ (namely, $P_\gamma$ for all $X$).  The only remaining possibility is that $\Phi$ is all of $SO(4)$.
\end{proof}
Next we study the curvature, $R^{\nabla}$, of $\nabla$ at $q_0$.  For $V\in\HH\cong\nu_{q_0}(\Soul)$, the expression
$$R^{\nabla}(V):=R^\nabla(X,Y)V$$
does not depend on the choice of oriented orthonormal basis, $\{X,Y\}$, of $T_{q_0}\Soul\cong\text{span}\{\jj,\kk\}$.  By the proof of Lemma~\ref{curvsoul}, the basis $\{2\jj,2\kk\}$ is orthonormal, so $$R^{\nabla}(V)=4R^\nabla(\jj,\kk)V.$$

\begin{lem}\label{mncurv} For all $V\in\HH$,
$R^\nabla(V) = \frac{7}{2} V\ii - \frac{15}{2}\ii V.$
\end{lem}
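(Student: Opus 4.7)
My plan is to work in the global trivialization $\nu(\Soul) \cong (S^1\bs S^3)\times\HH$, writing $\nabla = d + \omega$ for a 1-form $\omega$ on $\Soul$ with values in $\text{End}(\HH)$. By Proposition~\ref{connectionn}(1), $\omega_{q_0}(X)V = \frac{3}{4}XV - \frac{1}{4}VX$ for $X \in T_{q_0}\Soul$. The Cartan structure equation $\Omega = d\omega + \omega\wedge\omega$ together with the identity $R^\nabla(V) = 4R^\nabla(\jj,\kk)V$ (valid because $\{2\jj,2\kk\}$ is orthonormal at $q_0$ by the proof of Lemma~\ref{curvsoul}) reduces the problem to computing
$$R^\nabla(\jj,\kk)V = \bigl((\tilde{\jj}\cdot\omega(\tilde{\kk})) - (\tilde{\kk}\cdot\omega(\tilde{\jj})) - \omega([\tilde{\jj},\tilde{\kk}])\bigr)\bigm|_{q_0}(V) + [\omega_{q_0}(\jj),\omega_{q_0}(\kk)](V)$$
for any chosen extensions $\tilde{\jj}, \tilde{\kk}$ of $\jj, \kk$ to vector fields near $q_0$.

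I would take $\tilde{\jj}$ and $\tilde{\kk}$ to be the $S^3$-invariant Killing fields generated by the isometric action~\ref{S3identify}, whose integral curves through $q_0$ are $u \mapsto ([e^{u\jj}],0)$ and $u \mapsto ([e^{u\kk}],0)$. These satisfy $[\tilde{\jj},\tilde{\kk}](q_0) = 0$: the bracket is (up to sign) the fundamental vector field of $[\jj,\kk] = 2\ii \in \mathfrak{sp}(1)$, and since $\ii$ generates the Lie algebra of the fiber $S^1$ of $S^3 \to S^1\bs S^3$, its fundamental vector field vanishes at $[1]$. This eliminates the $\omega([\tilde{\jj},\tilde{\kk}])$ term.

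The main calculation is the directional derivative $(\tilde{\jj}\cdot\omega(\tilde{\kk}))|_{q_0}$. Applying the $S^3$-invariance of $\omega$ from Proposition~\ref{connectionn}(2) with $s = e^{-u\jj}$ (which sends $q_0$ to $([e^{u\jj}],0)$ and acts on fiber vectors by $\Ad_s$) gives $\omega_{[e^{u\jj}]}(\tilde{\kk})V = \frac{3}{4}K(u)V - \frac{1}{4}VK(u)$ with $K(u) := e^{-u\jj}\kk\,e^{u\jj}$. Differentiating at $u=0$ using $K'(0) = [\kk,\jj] = -2\ii$ yields $(\tilde{\jj}\cdot\omega(\tilde{\kk}))|_{q_0}V = -\frac{3}{2}\ii V + \frac{1}{2}V\ii$, and by symmetry $(\tilde{\kk}\cdot\omega(\tilde{\jj}))|_{q_0}V = \frac{3}{2}\ii V - \frac{1}{2}V\ii$. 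A direct expansion using $\jj\kk = \ii = -\kk\jj$ gives $[\omega_{q_0}(\jj),\omega_{q_0}(\kk)]V = \frac{9}{8}\ii V - \frac{1}{8}V\ii$. Summing the three contributions produces $R^\nabla(\jj,\kk)V = -\frac{15}{8}\ii V + \frac{7}{8}V\ii$, and multiplication by $4$ yields $R^\nabla(V) = \frac{7}{2}V\ii - \frac{15}{2}\ii V$.

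The delicate part is the $S^3$-invariance step: one must carefully track both the push-forward of the tangent-vector argument $\tilde{\kk}$ and the $\Ad_s$-conjugation of the fiber-endomorphism output, and verify that they combine into the uniform conjugation formula $\omega_{[e^{u\jj}]}(\tilde{\kk}) = \Ad_{e^{-u\jj}} \circ \omega_{q_0}(\kk) \circ \Ad_{e^{u\jj}}$ that yields the desired $K(u)$.
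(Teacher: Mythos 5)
Your approach is genuinely different from the paper's. The paper extends $2\jj,2\kk$ to commuting \emph{normal-coordinate} vector fields $2X,2Y$ near $q_0$, computes $R^\nabla(\jj,\kk)V=\nabla_X\nabla_YV-\nabla_Y\nabla_XV$ directly on a constant section, and handles the $S^3$-invariance by noting that $a(t)\star Y(\gamma(t))=g(t)Y$ with $g(0)=1$, $g'(0)=0$. You instead invoke the Cartan structure equation for the difference form $\omega$ and trade normal-coordinate fields for the $S^3$-Killing fields $\tilde\jj,\tilde\kk$; the key observation that makes this work---$[\tilde\jj,\tilde\kk](q_0)=0$ because $[\jj,\kk]=2\ii$ and $\ii$ generates the isotropy $S^1$ at $[1]$---is clean and is worth keeping. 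Both routes use the $S^3$-invariance to reduce the derivative of $\omega$ along the geodesic to a conjugation, and the bookkeeping is comparable in length.

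However, the ``uniform conjugation formula'' you flag as the delicate step and then assert, $\omega_{[e^{u\jj}]}(\tilde\kk)=\Ad_{e^{-u\jj}}\circ\omega_{q_0}(\kk)\circ\Ad_{e^{u\jj}}$, is not correct as stated. The invariance of $\nabla$ under $\phi_s$ (with $s=e^{-u\jj}$) gives $\omega_{[e^{u\jj}]}\bigl((\phi_s)_*\kk\bigr)=\Ad_s\circ\omega_{q_0}(\kk)\circ\Ad_{s^{-1}}$, but the Killing field satisfies $\tilde\kk([e^{u\jj}])=\pi_*(L_{e^{u\jj}}\kk)$ while $(\phi_s)_*\kk=\pi_*(R_{e^{u\jj}}\kk)$, and these differ: decomposing $L_{e^{u\jj}}\kk$ into horizontal and vertical parts for $S^3\to S^1\backslash S^3$ shows $\tilde\kk([e^{u\jj}])=\cos(2u)\,(\phi_s)_*\kk$. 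So the true formula carries an extra factor of $\cos(2u)$. Since $\cos(2u)=1+O(u^2)$, this factor is invisible to the first derivative at $u=0$, which is all your calculation uses, so your value of $(\tilde\jj\cdot\omega(\tilde\kk))|_{q_0}$ and hence the final answer are correct; but the displayed formula should either be corrected to include the $\cos(2u)$, or you should phrase the invariance step in terms of $(\phi_s)_*\kk$ and only then observe $\tilde\kk=(\phi_s)_*\kk+O(u^2)$, which is exactly the move the paper makes with its function $g(t)$.
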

\begin{proof}
Over a neighborhood of $q_0$ in $\Soul$, extend $V$ to the \emph{constant} section, and let $2X$ and $2Y$ denote the extensions of $2\jj$ and $2\kk$ to coordinate vector fields of a \emph{normal} coordinate patch at $q_0$.  We must compute the following at $q_0$:
$$R^\nabla(V) = 4R^\nabla(\jj,\kk)V = 4(\nabla_X\nabla_Y V - \nabla_Y\nabla_X V).$$
Consider the one parameter group $a(t)=e^{tX}$ in $S^3$, and the geodesic $\gamma(t)=a(-t)\star q_0 = ([a(t)],0)$ in $\Soul$.  let $Y(t):=Y(\gamma(t))$ and $V(t):=V(\gamma(t))$ denote the restrictions of $Y$  and $V$ to $\gamma$. The value of $\nabla_YV$ at the point $\gamma(t)$ equals:
\begin{eqnarray*}
(\nabla_YV)(\gamma(t)) & = & a(-t)\star\left(\nabla_{(a(t)\star Y(t))} (a(t)\star V(t))\right) \\
                       & = & a(-t)\star\left(\nabla_{g(t)Y}\left(a(t)Va(-t)\right)\right),\text{ where }g(0)=1\text{ and }g'(0)=0\\
                       & = & g(t)\cdot a(-t)\star\left(\frac 34 Y a(t)Va(-t)-\frac 14 a(t)Va(-t)Y  \right)\\
                       & = & g(t)\left(\frac 34 a(-t) Y a(t)V -\frac 14 Va(-t)Y a(t)  \right).
\end{eqnarray*}
The covariant derivative of the section $t\mapsto(\nabla_YV)(\gamma(t))$ along $\gamma(t)$ at $\gamma(0)=q_0$ is now found by adding its covariant derivative with respect to the flat connection to the connection difference form:
\begin{eqnarray*}
(\nabla_X\nabla_Y V)_{q_0}
                    & = & \frac{d}{dt}\Big|_{t=0}\left( g(t)\left(\frac 34 a(-t) Y a(t)V -\frac 14 Va(-t)Y a(t)  \right)\right)\\
                    &   & + \frac 34 X\left(\frac 34 YV-\frac 14 VY\right) - \frac 14\left(\frac 34 YV-\frac 14 VY\right) X \\
                    & = & -\frac 34(XY-YX)V + \frac 14V(XY-YX)\\
                    &   & + \frac{9}{16} \ii V-\frac{3}{16} (XVY+YVX) - \frac{1}{16}V\ii \\
                    & = & -\frac{15}{16} \ii V-\frac{3}{16} (XVY+YVX) +\frac{7}{16}V\ii .
\end{eqnarray*}
The result follows by similarly computing $(\nabla_Y\nabla_X V)_{q_0}$ and subtracting.
\end{proof}
\begin{cor}\label{C:splitting} Let $\{X,Y\}$ be an orthonormal basis of $\text{span}\{\jj,\kk\}$, and consider the splitting: $\HH=\sigma_1\oplus\sigma_2$, where $\sigma_1=\text{span}\{1,X\}$ and $\sigma_2=\text{span}\{\ii,Y\}$.  Let $\sigma_1(t)$ and $\sigma_2(t)$ denote the parallel extensions of these planes along $\gamma(t)=([e^{tX}],0)$.
\begin{enumerate}
\item For any $t\in\R$, $R^\nabla(\sigma_1(t))\subset\sigma_2(t)$ and $(D_{\gamma'(t)}R^\nabla)(\sigma_1(t))\subset\sigma_2(t)$.
\item If $V(t),W(t)\in\sigma_1(t)$ (or $V(t),W(t)\in\sigma_2(t)$), then all terms of the soul inequality vanish for the triple $\{V(t),W(t),\gamma'(t)\}$.
\end{enumerate}
\end{cor}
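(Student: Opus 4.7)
The plan is to combine the explicit curvature formula of Lemma~\ref{mncurv} with the $S^3$-equivariance from Equation~\ref{S3identify} to propagate the claim from $q_0$ to all of $\gamma$. First, I would verify at $t=0$ that $R^\nabla(\sigma_1) \subset \sigma_2$: Lemma~\ref{mncurv} gives $R^\nabla(1) = -4\ii \in \sigma_2$, while $R^\nabla(X) = \tfrac{7}{2}X\ii - \tfrac{15}{2}\ii X$ lies in $\sigma_2$ because $\ii$ and $X \in \spn\{\jj,\kk\}$ are pure imaginary and orthogonal, so they anticommute and $\ii X$ is a pure imaginary quaternion orthogonal to both $\ii$ and $X$, hence a scalar multiple of $Y$.

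To extend to $t \neq 0$, I would exploit that the $S^3$-isometry $s=e^{-tX}$ from Equation~\ref{S3identify} maps $q_0$ to $\gamma(t)$ and acts on $\nu_{q_0}(\Soul) \cong \HH$ by conjugation $\Phi_t(V) = e^{-tX}Ve^{tX}$. Combined with the parallel transport formula $P_t(V) = e^{-\frac{3}{4}tX}Ve^{\frac{1}{4}tX}$ from Corollary~\ref{parallel}, the composite $\Phi_t^{-1} \circ P_t$ acts as $V \mapsto e^{\frac{1}{4}tX}Ve^{-\frac{3}{4}tX}$. The key algebraic observation would then be that elements of $\sigma_1$ commute with $X$ (trivially) while elements of $\sigma_2$ anticommute with $X$ (since $\ii, Y$ are pure imaginary and orthogonal to $X$), so this composite simplifies to $V \mapsto V e^{-\frac{1}{2}tX}$ on $\sigma_1$ and to $V \mapsto V e^{-tX}$ on $\sigma_2$. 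Right-multiplication by any element of $\spn\{1, X\}$ preserves both subspaces (using $\ii X,\, YX \in \sigma_2$), so $\Phi_t^{-1}\circ P_t$ does as well. By equivariance of $R^\nabla$ under the isometry, this gives $R^\nabla_{\gamma(t)}(\sigma_1(t)) \subset \sigma_2(t)$. The second inclusion in (1) then follows from the Leibniz rule: for a parallel section $V(t) \in \sigma_1(t)$, we have $(D_{\gamma'(t)}R^\nabla)(V(t)) = \nabla_{\gamma'(t)}(R^\nabla(V(t)))$, which is the covariant derivative along $\gamma$ of a section of the parallel subbundle $\sigma_2(t)$ and hence stays in $\sigma_2(t)$.

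For part (2) with $V(t), W(t) \in \sigma_1(t)$, part (1) together with orthogonality of $\sigma_1(t)$ and $\sigma_2(t)$ (preserved by parallel transport) immediately gives $\lb R^\nabla(W(t)), V(t)\rb = 0$ and $\lb (D_{\gamma'(t)}R^\nabla)(W(t)), V(t)\rb = 0$. For the fiber-curvature term, I would argue that the isometry $\Phi_t^{-1}$ identifies $k^f_{\gamma(t)}(V(t), W(t))$ with $k^f_{q_0}$ evaluated on the plane $\spn\{V e^{-\frac{1}{2}tX}, W e^{-\frac{1}{2}tX}\}$, which by the preservation observation equals $\sigma_1$ for all $t$. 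Hence $k^f(V(t), W(t))$ is constant in $t$, so $(D_{\gamma'(t)}D_{\gamma'(t)}k^f)(V(t), W(t)) = 0$. The case $V(t), W(t) \in \sigma_2(t)$ would be analogous, with $V e^{-tX}$ in place of $V e^{-\frac{1}{2}tX}$.

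The main technical obstacle I anticipate is carefully reconciling the pointwise $S^3$-action on $\nu$ with parallel transport, since the two differ by explicit right-multiplications on $\HH$; once this bookkeeping is straightened out, the proof reduces to the easy algebraic check that right-multiplication by $\spn\{1, X\}$ preserves each $\sigma_i$.
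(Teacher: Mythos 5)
Your proof is correct, and it diverges from the paper's in an interesting way on part (2). For part (1), the paper asserts $e^{-tX}\star\sigma_i = \sigma_i(t)$ as a direct consequence of Corollary~\ref{parallel}; you fill in the algebra behind this claim, observing that $\Phi_t^{-1}\circ P_t$ reduces to right-multiplication by $e^{-\frac 12 tX}$ on $\sigma_1$ and by $e^{-tX}$ on $\sigma_2$, both of which lie in $\text{span}\{1,X\}$ and hence preserve the $\sigma_i$. This is the same mechanism the paper invokes, just made explicit; your Leibniz-rule derivation of the $(D_{\gamma'}R^\nabla)$-inclusion also matches the paper's implicit reasoning.

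For part (2), you take a genuinely different route. The paper argues: since $\lb R^\nabla(V(t)),W(t)\rb = 0$ identically, the soul inequality forces $(D_{\gamma'}D_{\gamma'}k^f)(V(t),W(t))\ge 0$ for all $t$; since $t\mapsto k^f(V(t),W(t))$ is periodic (the closed geodesic has period $\pi$ and the holonomy around it preserves $\sigma_i$), a periodic function with nonnegative second derivative is constant; hence $(D_{\gamma'}D_{\gamma'}k^f)=0$ and, plugging back in, the left side vanishes too. You instead prove constancy of $k^f(V(t),W(t))$ directly: the $S^3$-isometry $\Phi_t$ intertwines $k^f$ at $\gamma(t)$ and at $q_0$, and $\Phi_t^{-1}V(t) = Ve^{-\frac 12 tX}$ stays in the fixed plane $\sigma_1$ with the same wedge-norm (right-multiplication by a unit quaternion is an isometry of $\HH$), so $k^f(V(t),W(t)) = k^f_{q_0}(V,W)$ for all $t$. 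Your route does not invoke the soul inequality at all and gives constancy of $k^f$ outright rather than inferring it; the paper's route has the virtue of applying more generally (it would survive any modification of the fiber metrics that maintained nonnegative curvature, which is precisely the remark the paper makes immediately after the corollary). Both proofs are valid.
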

\begin{proof}
Lemma~\ref{mncurv} implies that $\text{span}\{1,\ii\}$ and $\text{span}\{\jj,\kk\}$ are the invariant subspaces for $R^\nabla$ at $q_0$.
Therefore, $R^\nabla(\sigma_1)\subset\sigma_2$.  Furthermore, Corollary~\ref{parallel} implies that $e^{-tx}\star\sigma_i = \sigma_i(t)$ for $i=1,2$.  In other words, there is an isometry taking $\sigma_i$ to $\sigma_i(t)$, so
$R^\nabla(\sigma_1(t))\subset\sigma_2(t)$ for all $t\in\R$.  It follows that $(D_{\gamma'(t)}R^\nabla)(\sigma_1(t))\subset\sigma_2(t)$ for all $t\in\R$ as well.

To prove part (2), let $V,W\in\sigma_1$ (or $V,W\in\sigma_2$) and let $V(t),W(t)$ denote their parallel extensions along $\gamma(t)$.  Since $\lb R^\nabla(V(t)),W(t)\rb = 0$ for all $t\in\R$, the soul inequality implies that the periodic function $t\mapsto k^f(V(t),W(t))$ has nonnegative second derivative, and must therefore be constant.  Thus, all terms of the soul inequality vanish.
\end{proof}
If the metrics on the Sharafutdinov fibers were modified in any manner which maintained nonnegative curvature, the above proof would remain valid, so we would still have, for every closed geodesic in the soul, a pair of parallel planes along along which all terms of the soul inequality would be forced to vanish.

Finally, we compute the covariant derivative, $DR^\nabla$, of the tensor $R^\nabla$.
\begin{lem}\label{L:DR} For all $V\in\HH$ and all $X\in T_{q_0}\Soul=\text{span}\{\jj,\kk\}$,
$$(D_X R^\nabla)(V) = \frac{15}{8}(X\ii-\ii X)V - \frac{21}{8}V(X\ii-\ii X).$$
\end{lem}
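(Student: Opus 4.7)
The plan is to mimic the computation of $R^\nabla$ in Lemma~\ref{mncurv}, but exploit the $S^3$-invariance of $\nabla$ (property (2) of Proposition~\ref{connectionn}) more systematically so as to avoid a cumbersome mixed-partial calculation. Since $\nabla$ is $S^3$-invariant, so is $R^\nabla$; concretely, for $a(t):=e^{tX}$ the formula
$$R^\nabla_{\gamma(t)}(V(t)) \;=\; a(-t)\star\Bigl[R^\nabla_{q_0}\bigl(a(t)\star V(t)\bigr)\Bigr]$$
holds along the geodesic $\gamma(t)=([e^{tX}],0) = a(-t)\star q_0$, where the $S^3$-action on fiber vectors is conjugation per \eqref{S3identify}. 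Combining this with the parallel-extension formula of Corollary~\ref{parallel} gives
$$a(t)\star V(t) \;=\; e^{tX}\bigl(e^{-\frac{3}{4}tX} V e^{\frac{1}{4}tX}\bigr)e^{-tX} \;=\; e^{\frac{1}{4}tX}\,V\,e^{-\frac{3}{4}tX},$$
and then Lemma~\ref{mncurv} yields an explicit closed form for the curve $t\mapsto R^\nabla_{\gamma(t)}(V(t))$ as an element of $\HH$ (after conjugating back by $e^{-tX}$).

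With that curve in hand, the next step is to convert its ordinary $t$-derivative at $t=0$ into the covariant derivative $(D_X R^\nabla)(V)$. Because $V(t)$ is parallel, $(D_X R^\nabla)(V)$ equals $\frac{D}{dt}\bigl|_{t=0}R^\nabla_{\gamma(t)}(V(t))$, and in the trivialization of $\nu(\Sigma)$ over $(S^1\bs S^3)\times\HH$ this covariant derivative is the Euclidean derivative plus the connection-difference form from property (1) of Proposition~\ref{connectionn}. Explicitly,
$$(D_X R^\nabla)(V) \;=\; \frac{d}{dt}\Big|_{t=0}\!R^\nabla_{\gamma(t)}(V(t)) \;+\; \tfrac{3}{4}X\cdot R^\nabla_{q_0}(V) \;-\; \tfrac{1}{4}R^\nabla_{q_0}(V)\cdot X,$$
and Lemma~\ref{mncurv} supplies $R^\nabla_{q_0}(V)=\tfrac{7}{2}V\ii-\tfrac{15}{2}\ii V$ for the second and third terms.

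The final step is bookkeeping: differentiate the two pieces of $e^{-tX}\bigl[\tfrac{7}{2}e^{\frac{1}{4}tX}Ve^{-\frac{3}{4}tX}\ii - \tfrac{15}{2}\ii e^{\frac{1}{4}tX}Ve^{-\frac{3}{4}tX}\bigr]e^{tX}$ using the product rule (each exponential contributes its derivative, $\pm\tfrac{3}{4}X$, $\pm X$, or $\pm\tfrac{1}{4}X$, in the appropriate slot), add the two connection-difference correction terms, and collect the six resulting monomial types $XV\ii,\ VX\ii,\ V\ii X,\ X\ii V,\ \ii XV,\ \ii VX$. The coefficients of $XV\ii$ and $\ii VX$ will cancel, and the remaining four will regroup as $\tfrac{15}{8}(X\ii-\ii X)V - \tfrac{21}{8}V(X\ii-\ii X)$, giving the claimed formula.

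The only real obstacle is keeping the noncommutative bookkeeping straight — there is no conceptual difficulty, but the arithmetic must be done carefully since terms like $\tfrac{7}{2}$ and $-\tfrac{7}{8}$ combine, and similarly $\tfrac{15}{2}$ with $-\tfrac{45}{8}$, to yield the asymmetric $\tfrac{21}{8}$ and $\tfrac{15}{8}$ coefficients.
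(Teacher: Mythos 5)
Your proposal is correct, and it is essentially the paper's own approach: both exploit the $S^3$-invariance of $R^\nabla$ to turn the tensor derivative into an explicit one-variable derivative along $\gamma(t)=([e^{tX}],0)$, then convert to a covariant derivative via the connection-difference form. The one (minor) departure is the choice of extension of $V$: you extend $V$ parallelly (so the Leibniz correction $R^\nabla(\tfrac{D}{dt}V(t))$ vanishes, at the cost of working with the more intricate $V(t)=e^{-\frac34 tX}Ve^{\frac14 tX}$), whereas the paper extends $V$ by the isometry, $V(t)=a(-t)Va(t)$ (which makes $R^\nabla(V(t))=a(-t)\star R^\nabla(V)$ simpler to differentiate, but forces one to subtract $R^\nabla(\tfrac{D}{dt}V(t))$). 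Both bookkeeping choices reach the same place, and I verified that your version does produce $\tfrac{15}{8}(X\ii-\ii X)V-\tfrac{21}{8}V(X\ii-\ii X)$ after the $XV\ii$ and $\ii VX$ terms cancel as you predicted.
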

\begin{proof}
Consider the one parameter group $a(t)=e^{tX}$ in $S^3$, and the corresponding geodesic $\gamma(t)=a(-t)\star q_0 = ([a(t)],0)$ in $\Soul$.
Extend $V$ to a section along $\gamma$ as follows:
$$V(t) = a(-t)\star V = a(-t)Va(t).$$
The covariant derivative of $V(t)$ equals its covariant derivative with respect to the flat connection plus the connection difference form; that is,
$$\frac{D}{dt}\Big|_{t=0} V(t) = (-XV+VX) + \left(\frac 34XV-\frac 14VX\right) = -\frac 14 XV+\frac 34 VX.$$
So we have:
\begin{eqnarray*}
(D_XR^\nabla)(V)
  & = & \frac{D}{dt}\Big|_{t=0}\left(R^\nabla(V(t))\right) - R^\nabla\left(\frac{D}{dt}\Big|_{t=0} V(t)\right) \\
  & = & \frac{D}{dt}\Big|_{t=0}\left(a(-t)\star R^\nabla(V)\right) - R^\nabla\left(-\frac 14 XV+\frac 34VX\right)\\
  & = & -\frac 14XR^\nabla(V) +\frac 34 R^\nabla(V)X - R^\nabla\left(-\frac 34XV+\frac 34 VX\right)\\
\end{eqnarray*}
Using Lemma~\ref{mncurv}, this simplifies to the desired formula.
\end{proof}
\section{The horizontal distribution and the O'Neill $A$-tensor}
In this section, we derive a formula for the horizontal space of $\pi:\overline{M}\ra M$ at an arbitrary point of $\overline{M}$, and then use this formula to compute the O'Neill $A$-tensor at the point $\overline{q}_0=(1,0,1,1,1,1)$.

\begin{prop}
Let $a,b,c,v,x,y\in S^3$, $r> 0$, and $\overline{q}:=(a,rv,x,b,c,y)\in\overline{M}$.
\begin{eqnarray*} \V_{\overline{q}}
  & = & \{(R_aX,R_{rv}Y,0,R_bX,R_cY,0)\mid X,Y\in sp(1)\} \\
  &   & \oplus\{(L_a S,L_{rv}S,L_xS,L_b T,L_cT,L_yT)\mid S,T\in sp(1)\} \\
  &  & \oplus\text{span}\{(0,0,0,0,0,R_y\ii)\},\\
\text{and}\,\,\,\,\Hor_{\overline{q}} & = & \{\mathfrak{X}_{\overline{q}}(A,B)\mid A\in\text{span}\{\jj,\kk\}, B\in\HH\},
\end{eqnarray*}
where $\mathfrak{X}_{\overline{q}}(A,B)$ is defined as follows:
\begin{eqnarray*}
\mathfrak{X}_{\overline{q}}(A,B) & := &
   \Bigg(R_a(\Ad_{by^{-1}}(A-r\cdot\text{Im}(B))), R_{rv}\left(\frac 1r\Ad_{cy^{-1}}B\right),\\
  &  & L_x\left(\Ad_{a^{-1}by^{-1}}(-A+r\cdot\text{Im}(B)) - r\cdot\Ad_{v^{-1}cy^{-1}}\text{Im}(B)\right),\\
  &  & L_b\left(\Ad_{y^{-1}}(-A+r\cdot\text{Im}(B))\right),-r\cdot L_c\left(\Ad_{y^{-1}}\text{Im}(B)\right),R_yA\Bigg).
\end{eqnarray*}
\end{prop}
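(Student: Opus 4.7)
The plan is to compute $\V_{\overline{q}}$ as the tangent space to the $\overline{G}$-orbit through $\overline{q}$, then verify that the proposed horizontal formula produces exactly a $6$-dimensional subspace of vectors orthogonal to every vertical vector. The vertical computation is obtained by differentiating the action in Equation~\ref{action} along each factor of $\overline{G}=S^3\times S^3\times S^3\times S^3\times S^1$. The $g_1$-direction yields $(X_1 a,0,0,X_1 b,0,0)$ and the $g_2$-direction yields $(0,X_2\cdot rv,0,0,X_2 c,0)$; these combine into the first summand of the statement with $X=X_1$, $Y=X_2$. The $s^{-1}$-direction yields $(-aS,-rvS,-xS,0,0,0)$ and the $t^{-1}$-direction yields $(0,0,0,-bT,-cT,-yT)$; after replacing $S,T$ by $-S,-T$, these combine into the second summand. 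The $\sigma$-direction (with $\sigma\in S^1$, whose Lie algebra is $\R\cdot\ii$) produces $(0,0,0,0,0,z\ii y)=z(0,0,0,0,0,R_y\ii)$, the third summand. The total dimension is $3+3+3+3+1=13$.

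Since $\dim T_{\overline{q}}\overline{M}=3+4+3+3+3+3=19$, we expect $\dim\Hor_{\overline{q}}=6$, matching the dimension of $\{(A,B):A\in\text{span}\{\jj,\kk\},B\in\HH\}$. It therefore suffices to check that $(A,B)\mapsto\mathfrak{X}_{\overline{q}}(A,B)$ is injective and that each $\mathfrak{X}_{\overline{q}}(A,B)$ is orthogonal to every vertical vector. Injectivity is immediate: the sixth coordinate $R_yA=Ay$ forces $A=0$, after which the second coordinate $(\Ad_{cy^{-1}}B)v$ forces $B=0$.

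For orthogonality, I use bi-invariance of the round metrics on each $S^3$ factor (so $\langle\xi g,\eta g\rangle=\langle g\xi,g\eta\rangle=\langle\xi,\eta\rangle$) together with the identity $\langle Bv,vS\rangle=\langle\Ad_{v^{-1}}\text{Im}(B),S\rangle$ valid for $B\in\HH$, $S\in sp(1)$, $v\in S^3$. Pairing $\mathfrak{X}_{\overline{q}}(A,B)$ against the third summand uses only coordinate 6 and reduces to $\langle A,\ii\rangle=0$, which holds because $A\in\text{span}\{\jj,\kk\}$. Pairing against the second summand with parameter $T$ uses coordinates 4, 5, 6 and produces three terms that telescope to zero once each is rewritten as a $\langle\,\cdot\,,T\rangle$ pairing via bi-invariance. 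Pairing against the second summand with parameter $S$ uses coordinates 1, 2, 3; after applying the identity above to rewrite the coordinate-2 contribution as $r\langle\Ad_{v^{-1}cy^{-1}}\text{Im}(B),S\rangle$, the three terms assemble into a common $\langle\,\cdot\,,S\rangle$ pairing whose argument is algebraically zero. Pairings against the first summand with parameter $X$ (resp.\ $Y$) involve only two nonzero coordinate contributions at positions 1, 4 (resp.\ 2, 5), which cancel directly after reorganizing both into $\langle\Ad_{by^{-1}}(\cdot),X\rangle$ pairings (resp.\ $r\langle\Ad_{cy^{-1}}\text{Im}(B),Y\rangle$ pairings).

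The main obstacle is keeping the bookkeeping straight for the coordinate-2 pairings: the quaternion $B$ carries a real part that interacts subtly with $sp(1)$-valued vertical parameters, and the identity $\langle Bv,vS\rangle=\langle\Ad_{v^{-1}}\text{Im}(B),S\rangle$ (which relies on $\text{Re}(S)=0$ to suppress $\text{Re}(B)$) is the key technical input making the cancellations work. All remaining cancellations are routine consequences of bi-invariance of the $S^3$ metrics and the $\Ad$-invariance of $sp(1)$.
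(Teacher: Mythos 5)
Your proof is correct and follows essentially the same route as the paper: differentiate the $\overline{G}$-action to get $\V_{\overline{q}}$, check orthogonality of the proposed horizontal vectors, and finish with the dimension count $19-13=6$. The one place you are more explicit than the paper is the injectivity check on $(A,B)\mapsto\mathfrak{X}_{\overline{q}}(A,B)$ (via the sixth and then second coordinates), which is indeed needed to conclude that the alleged horizontal space has full dimension $6$ rather than less; the paper leaves this implicit. Your key pairing identity $\langle Bv,vS\rangle=\langle\Ad_{v^{-1}}\mathrm{Im}(B),S\rangle$ is the correct tool for the coordinate-$2$ contributions, and the remaining cancellations follow from bi-invariance exactly as you describe.
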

\begin{proof}
The formula for the vertical space is easily verified, because it is spanned by the action fields.  It is straightforward to verify that each vector in the alleged horizontal space is orthogonal to the vertical space.
Since $$\text{dim}(\Hor_{\overline{q}})=\text{dim}(\overline{M}) - \text{dim}(\V_{\overline{q}}) = 19-13 = 6,$$ the alleged horizontal space must equal the entire horizontal space.
\end{proof}
For fixed vectors $A\in\text{span}\{\jj,\kk\}$ and $B\in\HH$, we can regard $\overline{q}\mapsto\mathfrak{X}_{\overline{q}}(A,B)$ as a smooth horizontal vector field on the $r\neq 0$ portion of $\overline{M}$, but this field does not extend continuously to the $r=0$ portion of $\overline{M}$ (although Lemma~\ref{Hq0} can be re-proven by considering the limit as $r\ra 0$).

We next describe the O'Neill A-tensor of $\pi$ at the point $\overline{q}_0=(1,0,1,1,1,1)\in\overline{M}$.
\begin{prop} Let $\overline{q}_0=(1,0,1,1,1,1)\in\overline{M}$.  Suppose $X,A\in\text{span}\{\jj,\kk\}$ and $V,W\in\HH$, so that $\overline{\mathcal{X}}=(X,V,-X,-X,0,X)$ and $\overline{\mathcal{Y}}=(A,W,-A,-A,0,A)$ both lie in the horizontal space $\Hor_{\overline{q}_0}$.
\begin{enumerate}
\item $X=0$, then:
$$
A(\overline{\mathcal{X}},\overline{\mathcal{Y}})  =
  \left(-\hat{R}+S,0,\hat{R}-\hat{L}-S,\hat{R}-S,-\hat{R},S\right),
$$
where $\hat{R}:=\text{Im}(W\overline{V})$, $\hat{L}:=\text{Im}(\overline{V}W)$ and $S:=\frac 14\left(\lb 3\hat{R}-\hat{L},\jj\rb\jj +\lb 3\hat{R}-\hat{L},\kk\rb\kk\right)$.
\item If $V=0$ then,
$$
A(\overline{\mathcal{X}},\overline{\mathcal{Y}})  =
  \left( -\frac 52[X,A],0,\frac 72[X,A],\frac 32[X,A],0,-\frac 12[X,A]\right).
$$
\end{enumerate}
\end{prop}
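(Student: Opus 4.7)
The O'Neill $A$-tensor is tensorial, so we compute $A_{\overline{\mathcal{X}}}\overline{\mathcal{Y}}$ as $\V(\nabla_{\overline{\mathcal{X}}}\tilde Y)$ for any horizontal vector field $\tilde Y$ extending $\overline{\mathcal{Y}}$, or equivalently as $\frac{1}{2}\V[\tilde X,\tilde Y]_{\overline{q}_0}$ for horizontal extensions of both. The covariant derivative is in the product metric on $\overline{M}$, and $\V$ is vertical projection using the vertical-space description of the previous proposition specialized to $r=0$ and $a=x=b=c=y=1$.

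For Case 2 ($V=0$), use the globally smooth horizontal extensions $\tilde X(\overline{q}):=\mathfrak{X}_{\overline{q}}(X,0)$ and $\tilde Y(\overline{q}):=\mathfrak{X}_{\overline{q}}(A,0)$; the choice $B=0$ eliminates the $1/r$-singularity. In each of the six components of $\mathfrak{X}_{\overline{q}}(X,0)$ the coefficient involves an $\Ad$-factor depending on the group coordinates $a,b,y$ (but not $v$ or $c$), so computing $[\tilde X,\tilde Y]_{\overline{q}_0}$ reduces to differentiating these $\Ad$-factors in the direction of $\tilde X$ (and vice versa) and taking $sp(1)$-brackets of the resulting vectors. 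At $\overline{q}_0$ every contribution is a multiple of $[X,A]\in\text{span}\{\ii\}$; the vertical projection then yields the stated $6$-tuple.

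For Case 1 ($X=0$), the analogous fields $\mathfrak{X}(0,V)$ are singular at the soul because the second component $(\Ad_{cy^{-1}}B)\cdot v$ depends on the unit direction $v$ of the $\HH$-coordinate. Instead, use the horizontal curve $c(t):=(1,tV,1,1,1,1)$, whose tangent $c'(t)=(0,V,0,0,0,0)$ equals $\mathfrak{X}_{c(t)}(0,|V|)$ — horizontal because $B=|V|$ is real (no imaginary-part singularity) and $v=V/|V|$ is constant along $c$. Along $c$, build a horizontal extension $\tilde Y(t):=\mathfrak{X}_{c(t)}(A(t),B(t))$ of $\overline{\mathcal{Y}}$ with $A(0)=0\in\text{span}\{\jj,\kk\}$ and $B(0)=W\bar V/|V|$ (so the second component equals $W$ at $t=0$); by tensoriality, $\dot A(0)$ and $\dot B(0)$ are irrelevant and may be set to zero. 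Differentiating coordinate-wise in the product metric, and using the quaternion identity $\Ad_{\bar V/|V|}\hat R=\hat L$ in the third component, yields a concrete $6$-tuple for $\tilde Y'(0)$ in terms of $\hat R$ and $\hat L$.

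The main obstacle is the final orthogonal decomposition $\tilde Y'(0)=H+V^{\text{vert}}$ with $H\in\Hor_{\overline{q}_0}$ and $V^{\text{vert}}\in\V_{\overline{q}_0}$. The horizontal space forces $H=(A,0,-A,-A,0,A)$ for some $A\in\text{span}\{\jj,\kk\}$, and the vertical-space description constrains the $\sigma$-parameter $\Sigma$ to lie in $\text{span}\{\ii\}$. This single constraint — that the $\text{span}\{\jj,\kk\}$-projection of $\Sigma$ vanishes — uniquely determines $A=-S$ with $S$ the quantity defined in the proposition, and reading off $V^{\text{vert}}$ from the decomposition produces the claimed $A_{\overline{\mathcal{X}}}\overline{\mathcal{Y}}$.
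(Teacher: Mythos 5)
Your strategy is essentially the paper's: use the explicit horizontal-space formula $\mathfrak{X}_{\overline{q}}(A,B)$, produce a horizontal extension of $\overline{\mathcal{Y}}$ along a curve through $\overline{q}_0$ tangent to $\overline{\mathcal{X}}$, differentiate, and project onto $\V_{\overline{q}_0}$. A few points of divergence and some imprecisions worth flagging.

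For part (2), the paper does not use $\tfrac 12\V[\tilde X,\tilde Y]$; it takes the single path $\overline{\gamma}(t)=(e^{tX},0,e^{-tX},e^{-tX},1,e^{tX})$, extends $\overline{\mathcal{Y}}$ to $\overline{\mathcal{Y}}(t)=\lim_{r\to 0}\mathfrak{X}_{(e^{tX},r,e^{-tX},e^{-tX},1,e^{tX})}(A,W)$, and vertically projects $\overline{\mathcal{Y}}'(0)$. Your Lie-bracket route is valid but is more work (you must account for both directional derivatives of the $\Ad$-coefficients \emph{and} the Lie-algebra bracket terms coming from the left/right trivializations in each factor; your description glosses over the latter). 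Also note a small error: $\mathfrak{X}_{\overline{q}}(A,0)$ depends on $a,b,x,y$, not just $a,b,y$. More substantively, setting $B=0$ gives an extension of $(A,0,-A,-A,0,A)$, not of $\overline{\mathcal{Y}}=(A,W,-A,-A,0,A)$ when $W\neq 0$; you need to either take $B=W$ (as the paper does --- there is no $1/r$ blowup since $R_{rv}(\tfrac 1r\Ad B)=(\Ad B)v$; the issue at $r=0$ is only that the unit direction $v$ is undefined, which the path $\overline{\gamma}(t)$ resolves by fixing $v=1$) or explicitly argue that the cross term $A((X,0,-X,-X,0,X),(0,W,0,0,0,0))$ vanishes.

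For part (1), your curve and choice of $B(0)$ match the paper (modulo the paper's normalization $|V|=1$ and $B=R_{V^{-1}}W$), and the identity $\Ad_{\overline{V}}\hat R=\hat L$ is exactly the step needed in the third component. Your argument that $\dot A(0)$ and $\dot B(0)$ are irrelevant is correct but needs the observation that $\partial\mathfrak{X}/\partial A$ and $\partial\mathfrak{X}/\partial B$ are horizontal (so they die under $\V$-projection). The final decomposition is where you take a genuinely different --- and clunkier --- route: you solve the linear system $\tilde Y'(0)=H+V^{\mathrm{vert}}$, feeding in the structure of $\V_{\overline{q}_0}$ (the constraint that the $S^1$-action-field coefficient must lie in $\text{span}\{\ii\}$) to pin down $H=(-S,0,S,S,0,-S)$. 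The paper does this in one line: since $\Hor_{\overline{q}_0}$ has the orthonormal description $\{(0,B,0,0,0,0)\}\oplus\text{span}\{\overline H_1,\overline H_2\}$ with $\overline H_i=\tfrac 12(\cdot,0,-\cdot,-\cdot,0,\cdot)$, one simply subtracts $(0,W_2,0,0,0,0)+\lb\overline{\mathcal{W}},\overline H_1\rb\overline H_1+\lb\overline{\mathcal{W}},\overline H_2\rb\overline H_2$ from the derivative. Your method arrives at the same answer but obscures why the $\ii$-component of $3\hat R-\hat L$ survives while the $\jj,\kk$-part is absorbed into $A$. I'd recommend switching to the direct orthogonal projection.

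Overall the proposal is sound and would, if the computations are carried out carefully, reproduce the stated formulas; the gaps are the unverified Lie-bracket bookkeeping in part (2), the treatment of nonzero $W$ there, and the inefficiency of the final decomposition in part (1).
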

\begin{proof}
The most efficient way to explicitly project a vector $$\overline{\mathcal{W}}=(W_1,W_2,W_3,W_4,W_5,W_6)\in T_{\overline{q}_0}\overline{M}$$ onto the vertical space $\V_{\overline{q}_0}$ is to subtract its horizontal component as follows:
$$\overline{\mathcal{W}}^\V = \overline{\mathcal{W}} - (0,W_2,0,0,0,0) - \lb \overline{\mathcal{W}},\overline{H}_1\rb \overline{H}_1 - \lb \overline{\mathcal{W}},\overline{H}_2\rb \overline{H}_2,$$
where
$$\overline{H}_1 = \frac 12 (\jj,0,-\jj,-\jj,0,\jj) \,\,\,\text{ and }\,\,\, \overline{H}_2 = \frac 12 (\kk,0,-\kk,-\kk,0,\kk).$$

We first assume that $X=0$ and prove part (1).  We lose no generality in assuming that $|V|=1$, in which case $V^{-1}=\overline{V}$, so $\hat{R}=\text{Im}(R_{V^{-1}}W)$ and $\hat{L}=\text{Im}(L_{V^{-1}}W)$.  The path $r\mapsto\overline{\gamma}(r): = (1,r\cdot V,1,1,1,1)$ in $\overline{M}$ goes through $\overline{\gamma}(0)=\overline{q}_0$ with initial direction $\overline{\gamma}'(0)=\overline{\mathcal{X}}$.  The following is a differentiable extension of the horizontal vector $\overline{\mathcal{Y}}$ to a horizontal field, $\overline{\mathcal{Y}}(r)$, along this path:
$$\overline{\mathcal{Y}}(r) = \mathfrak{X}_{\overline{\gamma}(r)}(A,R_{V^{-1}}W) = \left(A-r\hat{R},W, -A+r(\hat{R}-\hat{L}),-A+r\hat{R},-r\hat{R},A\right).$$
So we have:
\begin{eqnarray*}
A(\overline{\mathcal{X}},\overline{\mathcal{Y}}) & = & \left(\frac{d}{dr}\Big|_{r=0}\mathcal{Y}(r)\right)^\V
              =\left(-\hat{R},0,\hat{R}-\hat{L},\hat{R},-\hat{R},0\right)^\V.
\end{eqnarray*}
The formula in part (1) of the proposition is obtained by explicitly computing the $\V$-projection in the manner described above.

Next, we assume that $V=0$, and prove part (2).  In this case, the path $t\mapsto\overline{\gamma}(t):=\left(e^{tX},0,e^{-tX},e^{-tX},1,e^{tX}\right)$ in $\overline{M}$ goes through $\overline{\gamma}(0)=\overline{q}_0$ with initial direction $\overline{\gamma}'(0)=\overline{\mathcal{X}}$.  The following is a differentiable extension of the horizontal vector $\overline{\mathcal{Y}}$ to a horizontal field, $\overline{\mathcal{Y}}(t)$, along this path:
\begin{eqnarray*}
\overline{\mathcal{Y}}(t) & = & \lim_{r\ra0}\mathfrak{X}_{\left(e^{tX},r\cdot 1,e^{-tX},e^{-tX},1,e^{tX}\right)}(A,B) \\
               & = & \left(R_{e^{tX}}(\Ad_{e^{-2tX}}A),\Ad_{e^{-tX}}B,L_{e^{-tX}}(\Ad_{e^{-3tX}}(-A)),L_{e^{-tX}}(\Ad_{e^{-tX}}(-A)), 0,R_{e^{tX}}A\right).
\end{eqnarray*}
So we have:
\begin{eqnarray*}
A(\overline{\mathcal{X}},\overline{\mathcal{Y}}) & = & \left(\frac{d}{dt}\Big|_{t=0}\mathcal{Y}(t)\right)^\V
              =\left(-\frac 52[X,A],-[X,B],\frac 72[X,A],\frac 32[X,A],0,-\frac 12[X,A]\right)^\V.
\end{eqnarray*}
The formula in part (2) of the proposition is obtained by explicitly computing the $\V$-projection in the manner described above, using that $[X,A]\in\text{span}\{\ii\}$.
\end{proof}

\section{Vertical curvatures}
In this section, we use O'Neill's formulas to study the sectional curvature of an arbitrary plane at $q_0$ spanned by two normal vectors to the soul.

As in the previous section, define $\overline{q}_0=(1,0,1,1,1,1)\in\overline{M}$ and $q_0=\pi(\overline{q}_0)\in\Sigma$.  Let $V,W\in\HH$ be orthogonal and unit-length, set $\overline{\mathfrak{V}}=(0,V,0,0,0,0)$ and set $\overline{\mathfrak{W}} = (0,W,0,0,0,0)$, so that $\{\mathfrak{V}:=\pi_*\overline{\mathfrak{V}},\mathfrak{W}:=\pi_*\overline{\mathfrak{W}}\}$ is a general orthonormal pair in $\nu_{q_0}(\Soul)$.

Let $R$ denote the curvature tensor of $M$ and let $\overline{R}$ denote the curvature tensor of $\overline{M}$.  Denote the un-normalized sectional curvature as $k(A,B):=\lb R(A,B)B,A\rb$ and $\overline{k}(A,B):=\lb \overline{R}(A,B)B,A\rb$.  Denote the restriction of $k$ to $\nu_{q_0}(\Soul)\times \nu_{q_0}(\Soul)$ as $k^f$.  Since a Sharafudtinov fiber is always totally geodesic at a point of the soul, $k^f$ could also be interpreted as the curvature at $q_0$ of the intrinsic metric on the Sharafutinov fiber through $q_0$.  Notice that $k^f$ must be invariant under the action of the isotropy group, $S^1$, on $\nu_{q_0}(\Soul)$.

O'Neill's formula gives
\begin{eqnarray}\label{kff}
k^f(\mathfrak{V},\mathfrak{W}) & = & k(\overline{\mathfrak{V}},\overline{\mathfrak{W}}) + 3|A(\overline{\mathfrak{V}},\overline{\mathfrak{W}})|^2 = 3|A(\overline{\mathfrak{V}},\overline{\mathfrak{W}})|^2 \\
  & = & 3\left|\left(-\hat{R}+S,0,\hat{R}-\hat{L}-S,\hat{R}-S,-\hat{R},S\right)\right|^2\notag
\end{eqnarray}
where $\hat{R}:=\text{Im}(W\overline{V})$, $\hat{L}:=\text{Im}(\overline{V}W)$ and $S:=\frac 14\left(\lb 3\hat{R}-\hat{L},\jj\rb\jj +\lb 3\hat{R}-\hat{L},\kk\rb\kk\right)$.

Equation~\ref{kff} yields the following explicit vertical curvature formula:
\begin{prop}\label{P:verticurv} Identifying $M\cong(S^1\bs S^3)\times\HH$ and $\Soul\cong\{([q],0)\mid q\in S^3\}$ and $q_0\cong([1],0)$ as before, the unnormalized sectional curvature of the vectors $V=a+b\ii+c\jj+d\kk$ and $W=x+y\ii+z\jj+w\kk$ in $\nu_{q_0}(\Soul)\cong\HH$ equals:
\begin{eqnarray*}
k^f(V,W) & = & 6x^2c^2+6z^2a^2+6x^2d^2+6w^2a^2+21d^2z^2+21c^2w^2 \\ & & 9z^2b^2+9w^2b^2+9y^2d^2+9y^2c^2+9b^2x^2+9a^2y^2\\
             &   &-42dzcw-18wbyd-18yczb-18bxay \\ & & -12aydz+12aycw-12bxcw+12bxdz-12xcza-12xdwa.
\end{eqnarray*}
\end{prop}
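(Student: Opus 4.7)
The plan is to substitute directly into Equation~\ref{kff}, which already reduces $k^f(V,W)$ to $3|A(\overline{\mathfrak V}, \overline{\mathfrak W})|^2$ with an explicit six-tuple expression involving only $\hat R = \text{Im}(W\overline V)$, $\hat L = \text{Im}(\overline V W)$, and $S = \frac 14(\langle 3\hat R - \hat L, \jj\rangle \jj + \langle 3\hat R - \hat L, \kk\rangle \kk)$. No further geometry is required; the task is purely algebraic in $\HH$.

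First, I would write $V = a + b\ii + c\jj + d\kk$ and $W = x + y\ii + z\jj + w\kk$ and expand the quaternion products $W\overline V$ and $\overline V W$ using the standard multiplication rules. Reading off the imaginary coordinates gives explicit bilinear expressions for $\hat R_\ii, \hat R_\jj, \hat R_\kk$ and $\hat L_\ii, \hat L_\jj, \hat L_\kk$ in the eight variables. The definition of $S$ then yields $S_\ii = 0$ and $S_q = \frac 14(3\hat R_q - \hat L_q)$ for $q\in\{\jj,\kk\}$ in coordinates.

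Second, I would compute the squared norm of the tuple component-by-component in each imaginary direction. Because the real parts of all five quaternions in the tuple vanish, the three imaginary directions decouple. The $\ii$-contribution simplifies (since $S_\ii = 0$) to $3\hat R_\ii^2 + (\hat R_\ii - \hat L_\ii)^2$, while the $\jj$- and $\kk$-contributions each take the form $2(\hat R_q - S_q)^2 + (\hat R_q - \hat L_q - S_q)^2 + \hat R_q^2 + S_q^2$ for $q\in\{\jj,\kk\}$. Summing these, multiplying by $3$, and collecting by monomial in the eight variables produces the stated polynomial.

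The main obstacle is not conceptual but the sheer bookkeeping: roughly twenty cross terms must be collected with correct signs. As a consistency check I would verify $S^1$-invariance of the final polynomial under the isotropy action $V \mapsto e^{\ii\theta} V e^{-\ii\theta}$, which rotates $(c,d)$ and simultaneously $(z,w)$ by a common angle while fixing $a, b, x, y$. The answer should regroup into invariants of this rotation: for instance $21 d^2 z^2 + 21 c^2 w^2 - 42 dzcw = 21(cw - dz)^2$, and $9 b^2 x^2 + 9 a^2 y^2 - 18 abxy = 9(ay - bx)^2$, while the remaining quartic terms organize into multiples of $c^2 + d^2$, $z^2 + w^2$, $cz + dw$, and $(ay - bx)(cw - dz)$. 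This regrouping both simplifies the final collection step and catches any sign errors made during the quaternion expansions.
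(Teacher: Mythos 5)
Your plan is exactly the paper's (implicit) argument: the paper simply states that Equation~\ref{kff} ``yields'' the formula, and the derivation is precisely the algebraic expansion you describe---write $\hat R=\text{Im}(W\overline V)$, $\hat L=\text{Im}(\overline VW)$, and $S$ in the eight real coordinates, compute $3|(-\hat R+S,\,0,\,\hat R-\hat L-S,\,\hat R-S,\,-\hat R,\,S)|^2$ componentwise in the three imaginary directions (the entries are purely imaginary and $S_\ii=0$), and collect terms. Your intermediate reductions, the $\ii$-contribution $3\hat R_\ii^2+(\hat R_\ii-\hat L_\ii)^2$ and the $\jj,\kk$-contributions $2(\hat R_q-S_q)^2+(\hat R_q-\hat L_q-S_q)^2+\hat R_q^2+S_q^2$, are correct, and your $S^1$-invariance sanity check is a sensible safeguard on the bookkeeping.
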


\section{Verifying the soul inequality}
As reviewed in the introduction, nonnegative curvature implies that the soul inequality is satisfied; that is:
$$\lb (D_XR^\nabla)(V),W\rb^2 \leq\left(\lb R^\nabla(V),W\rb^2 + \frac 23 (D_XD_X k^f)(W,V)\right)\cdot 16,$$
for all $p\in\Soul$, all unit-length $X\in T_p\Soul$ and all $V,W\in \nu_p(\Soul)$.

Denote the right side minus the left side of this inequality as $\text{IN}(X,V,W)$.  It suffices to understand this inequality when $p=q_0=([1],0)$ and $X=2\jj$, since any other $(p,X)$ can be taken to $(q_0,2\jj)$ by the isometric $S^3$ action on $M$.
\begin{prop}\label{IIN} If $q_0=([1],0)\in\Soul$, $X=2\jj\in T_{q_0}\Soul$ and $V,W\in\nu_{q_0}(\Soul)\cong\HH$ have components $V=a+b\ii+c\jj+d\kk$ and $W=x+y\ii+z\jj+w\kk$, then
\begin{eqnarray*}
\text{IN}(2\jj,V,W) & = &
 188b^2z^2+55d^2x^2+512b^2x^2+512a^2y^2+1104d^2z^2 \\
 & &+1104c^2w^2+55a^2w^2+188c^2y^2\\
 & &+1300bxdz-1408bxcw -1408aydz+1300aycw-2208dzcw \\
 & &-1024bxay-376cybz+108cydx+108awbz-110awdx.
\end{eqnarray*}
\end{prop}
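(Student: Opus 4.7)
The plan is to compute directly each term of
$$\text{IN}(2\jj,V,W) = 16\lb R^\nabla(V),W\rb^2 + \tfrac{32}{3}(D_{2\jj}D_{2\jj}k^f)(W,V) - \lb (D_{2\jj}R^\nabla)(V),W\rb^2$$
(where the factor $16$ comes from $k_\Sigma$ via Lemma~\ref{curvsoul}), and then to sum the resulting polynomial contributions and collect monomials. The first two inner products come straight from the preceding lemmas. Lemma~\ref{mncurv} gives $R^\nabla(V) = 4b - 4a\ii + 11d\jj - 11c\kk$, hence $\lb R^\nabla(V),W\rb = 4bx - 4ay + 11dz - 11cw$. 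Since $2\jj\ii - \ii\cdot 2\jj = -4\kk$, Lemma~\ref{L:DR} gives $(D_{2\jj}R^\nabla)(V) = -\tfrac{15}{2}\kk V + \tfrac{21}{2}V\kk = -3d + 18c\ii - 18b\jj + 3a\kk$, hence $\lb (D_{2\jj}R^\nabla)(V),W\rb = -3dx + 18cy - 18bz + 3aw$. Squaring produces explicit polynomials in $a,b,c,d,x,y,z,w$.

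The remaining term $(D_{2\jj}D_{2\jj}k^f)(W,V)$ I would interpret as $f''(0)$, where $f(t) := k^f(V(t),W(t))$ is evaluated on the parallel extensions of $V,W$ along $\gamma(t) = ([e^{2t\jj}],0)$. Corollary~\ref{parallel} gives $V(t) = e^{-(3t/2)\jj}\,V\,e^{(t/2)\jj}$ and similarly for $W(t)$. Because the $S^3$-action of Equation~\ref{S3identify} is isometric, it preserves Sharafutdinov fibers and hence the intrinsic fiber curvature $k^f$. Applying the isometry $e^{2t\jj}\star$, which maps $\gamma(t)\mapsto q_0$ and $V(t)\mapsto \tilde V(t):=e^{(t/2)\jj}\,V\,e^{-(3t/2)\jj}$, reduces the problem to
$$f(t) = k^f_{q_0}\bigl(\tilde V(t),\tilde W(t)\bigr),$$
with $k^f_{q_0}$ given by the explicit polynomial of Proposition~\ref{P:verticurv}.

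Since $k^f_{q_0}$ is a polynomial bilinear in $V\otimes V$ and in $W\otimes W$, the Leibniz rule expresses $f''(0)$ in terms of $\tilde V(0) = V$, $\tilde W(0) = W$, and the first two derivatives (using that $\jj$ commutes with $e^{s\jj}$)
$$\tilde V'(0) = \tfrac{1}{2}\jj V - \tfrac{3}{2}V\jj = c + 2d\ii - a\jj - 2b\kk,$$
$$\tilde V''(0) = -\tfrac{5}{2}V - \tfrac{3}{2}\jj V\jj = -a - 4b\ii - c\jj - 4d\kk,$$
together with the analogous expressions for $\tilde W'(0)$ and $\tilde W''(0)$. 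Multiplying $f''(0)$ by $\tfrac{32}{3}$, subtracting the square of the second inner product from $16$ times the square of the first, and collecting like monomials yields the stated formula. The only real obstacle is the bulk of the symbolic algebra; each individual step is entirely mechanical, but the quadratic polynomials in eight variables make the final verification most safely carried out with a computer algebra system.
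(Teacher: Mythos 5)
Your proof is correct and takes essentially the same route as the paper: compute $\lb R^\nabla(V),W\rb$ and $\lb (D_{2\jj}R^\nabla)(V),W\rb$ from Lemmas~\ref{mncurv} and~\ref{L:DR}, reduce $(D_{2\jj}D_{2\jj}k^f)(W,V)$ to $f''(0)$ via parallel transport and the isometric $S^3$-action, and evaluate $k^f$ at $q_0$ using Proposition~\ref{P:verticurv}. In fact your expression $\tilde V(t) = e^{(t/2)\jj}V e^{-(3t/2)\jj}$ carries the correct sign for $e^{-tX}\star\tilde V(t)$ to match the parallel extension of Corollary~\ref{parallel}, whereas the paper's proof writes $V(t)=e^{-\frac 14 tX}Ve^{-\frac 34 tX}$, which appears to be a sign typo; the subsequent computation in the paper matches yours.
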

\begin{proof}
Lemma~\ref{mncurv} gives:
$$R^\nabla(V) = \frac 72 V\ii - \frac{15}{2}\ii V = 4b -4a\ii + 11d\jj - 11c\kk.$$
Lemma~\ref{L:DR} gives:
$$(D_X R^\nabla)(V) = \frac{15}{8}(X\ii - \ii X)V - \frac{21}{8}V(X\ii-\ii X) = -3d + 18c\ii - 18b\jj + 3a\kk.$$
It remains to compute the expression $(D_XD_X k^f)(W,V)$.  Consider the path
\begin{eqnarray*}
V(t) & = &  e^{-\frac 14tX}\cdot V\cdot e^{-\frac 34tX}\\
       & = & \left(1-\frac 14tX+\frac{1}{32}t^2X^2+\cdots\right)\cdot V\cdot\left(1-\frac 34tX+\frac{9}{32}t^2X^2+\cdots\right) \\
       & = & V - t\left(\frac 14 XV +\frac 34 VX\right)
        + t^2\left( \frac{9}{32}VX^2+\frac{1}{32} X^2V+\frac{3}{16}XVX\right)+\cdots
\end{eqnarray*}
(the terms of order greater than 2 are not exhibited here because they will not effect the final answer).  Define the path $W(t)$ analogously.  Corollary~\ref{parallel} implies that $e^{-tX}\star V(t)$ and $e^{-tX}\star W(t)$ are parallel along $t\mapsto\exp(tX)$.  Define $f(t) = k^f(V(t),W(t))$, which can be explicitly computed using Proposition~\ref{P:verticurv}. Since the $S^3$ action on $M$ is by isometries, we have:
\begin{eqnarray*}
(D_XD_X k^f)(W,V) & = & f''(0) \\ & = & -48bxay+156dzcw-12dxaw-96cybz+24a^2y^2-78d^2z^2\\
& & -78c^2w^2+6d^2x^2+48c^2y^2+48b^2z^2+6a^2w^2+24b^2x^2.
\end{eqnarray*}
The result follows by combining terms and simplifying.
\end{proof}

We now use the above explicit formula to demonstrate that $\text{IN}$ only equals $0$ when it is forced to do so by Corollary~\ref{C:splitting}:
\begin{cor}\label{Marius} Let $\{X,Y\}$ be an orthonormal basis of $\text{span}\{\jj,\kk\}$, and consider the splitting: $\HH=\sigma_1\oplus\sigma_2$, where $\sigma_1=\text{span}\{1,X\}$ and $\sigma_2=\text{span}\{\ii,Y\}$.  If $V,W\in\nu_{q_0}(\Soul)\cong\HH$, then $\text{IN}(2X,V,W) \geq 0$ and $=0$ if and only if $V$ and $W$ are linearly dependent or $\text{span}\{V,W\}$ equals $\sigma_1$ or $\sigma_2$.
\end{cor}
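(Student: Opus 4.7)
\emph{Plan.} First I would reduce to $X = \jj$ using the isotropy symmetry at $q_0$: the subgroup $S^1 = \{e^{\ii\theta}\} \subset S^3$ fixes $q_0$, preserves $\nabla$ by Proposition~\ref{connectionn}, and acts on both $T_{q_0}\Soul$ and $\nu_{q_0}(\Soul)\cong\HH$ via $\Ad_{e^{\ii\theta}}$, which fixes $1$ and $\ii$ while rotating $\text{span}\{\jj,\kk\}$. Thus any orthonormal basis $\{X,Y\}$ of $\text{span}\{\jj,\kk\}$ lies in the isotropy orbit of $\{\jj,\kk\}$, and the associated splitting $\sigma_1 = \text{span}\{1,X\}$, $\sigma_2 = \text{span}\{\ii,Y\}$ is carried equivariantly to $\text{span}\{1,\jj\}, \text{span}\{\ii,\kk\}$. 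Since $R^\nabla$, $DR^\nabla$, and $k^f$ are all isometry-invariant, so is $\text{IN}(2X,V,W)$; it therefore suffices to prove the corollary for $X = \jj$.

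Next, I claim that the eighteen-term polynomial in Proposition~\ref{IIN} is a quadratic form in the six Pl\"ucker coordinates
\[
p_{12} = ay-bx,\ p_{13} = az-cx,\ p_{14} = aw-dx,\ p_{23} = bz-cy,\ p_{24} = bw-dy,\ p_{34} = cw-dz
\]
of the plane $V\wedge W$. A direct expansion and match of coefficients yields
\[
\text{IN}(2\jj,V,W) = 512\,p_{12}^2 + 1354\,p_{12}p_{34} + 1104\,p_{34}^2 + 55\,p_{14}^2 + 54\,p_{14}p_{23} + 188\,p_{23}^2 + 54\,p_{13}p_{24}.
\]
The single indefinite term $54\,p_{13}p_{24}$ is then eliminated via the Pl\"ucker relation $p_{13}p_{24} = p_{12}p_{34} + p_{14}p_{23}$, leaving two decoupled positive-definite $2\times 2$ forms in $(p_{12},p_{34})$ and $(p_{14},p_{23})$. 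Completing the square in each pair writes $\text{IN}(2\jj,V,W)$ as a sum of four non-negative squares, showing $\text{IN}\ge 0$.

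Equality forces each of the four squares to vanish, so $p_{12} = p_{14} = p_{23} = p_{34} = 0$; the Pl\"ucker relation then gives $p_{13}p_{24} = 0$. Thus one of three possibilities holds: all six Pl\"ucker coordinates vanish (and $V,W$ are linearly dependent); only $p_{13}\ne 0$, so $V\wedge W$ is a multiple of $1\wedge\jj$ and $\text{span}\{V,W\} = \sigma_1$; or only $p_{24}\ne 0$, so $V\wedge W$ is a multiple of $\ii\wedge\kk$ and $\text{span}\{V,W\} = \sigma_2$. These are precisely the equality cases stated. The main obstacle in this plan is the middle-paragraph identification of $\text{IN}(2\jj,V,W)$ with the Pl\"ucker-quadratic above: several distinct Pl\"ucker products (for instance $p_{12}p_{34}$ and $p_{13}p_{24}$) expand to underlying monomials in $a,b,c,d,x,y,z,w$ that coincide after reordering, so the coefficient bookkeeping must be done carefully. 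Once that identity is in hand, the Pl\"ucker substitution and sum-of-squares decomposition are routine.
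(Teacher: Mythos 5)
Your proposal is correct and follows essentially the same route as the paper: reduce to $X=\jj$ by the isometry group, recognize $\text{IN}(2\jj,V,W)$ as a quadratic form in the $2\times2$ minors of the $(V,W)$-matrix, show it is positive semidefinite, and read off the equality locus. The only cosmetic difference is that the paper immediately packages the polynomial as a quadratic form in four of the six minors $A=-p_{12}$, $B=-p_{34}$, $C=p_{23}$, $D=-p_{14}$ (and writes it as an explicit sum of squares), whereas you first allow a $p_{13}p_{24}$ cross term and then absorb it with the Pl\"ucker relation $p_{13}p_{24}=p_{12}p_{34}+p_{14}p_{23}$; after that substitution your two $2\times2$ blocks $512\,p_{12}^2+1408\,p_{12}p_{34}+1104\,p_{34}^2$ and $55\,p_{14}^2+108\,p_{14}p_{23}+188\,p_{23}^2$ coincide with the paper's, and your negative-discriminant check is a legitimate alternative to the paper's explicit square-completion. (Incidentally, the paper's displayed sum of squares appears to carry a typo -- the $B^2$ coefficient should be $80$, not $108$, to reproduce the $1104\,d^2z^2$ term of Proposition~\ref{IIN} -- which your discriminant computation quietly sidesteps.)
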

\begin{proof}
Due to the isometry group, it suffices to verify the case $X=\jj$.  For this, let $A=bx-ay$, $B=dz-cw$, $C=bz-cy$ and $D=dx-aw$.  The equation of Proposition~\ref{IIN} simplifies to:
$$\text{IN}(2\jj,V,W) = 107 C^2 + 19D^2 + (9C-6D)^2 + 28 A^2 + 108 B^2 + (22A+32B)^2.$$
This expression is nonnegative and equals zero if and only if $A=B=C=D=0$, which occurs if and only if $V$ and $W$ are linearly dependent or $\text{span}\{V,W\}$ equals $\text{span}\{1,\jj\}$ or equals $\text{span}\{\ii,\kk\}$.
\end{proof}

\begin{cor}
The vector $V\in\nu_{q_0}(\Soul)\cong\HH$ is good if and only if $V$ is perpendicular to neither $1$ nor $\ii$.
\end{cor}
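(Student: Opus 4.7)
The plan is to read Corollary~\ref{Marius} through the $S^1$-isotropy at $q_0$ and then translate the resulting condition into a perpendicularity condition on $V$. First, I unfold the definition: by the transitive isometric $S^3$-action on $\Soul$ provided by~\eqref{S3identify}, it suffices to check that $q_0=([1],0)$ is good, which by definition means that $\text{IN}(X',V,W)>0$ for every unit-length $X'\in T_{q_0}\Soul$ and every $W\in\nu_{q_0}(\Soul)$ linearly independent of $V$. Writing $X'=2X$ with $X$ a unit vector of $\text{span}\{\jj,\kk\}$ (as in Lemma~\ref{curvsoul}), this is exactly the expression studied in Proposition~\ref{IIN} and Corollary~\ref{Marius}.

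Second, I invoke the $S^1$-isotropy of $q_0$: for $s=e^{\ii\theta/2}\in S^1$, the derivative of the isometry $([a],v)\mapsto([as^{-1}],svs^{-1})$ acts on $T_{q_0}\Soul$ and on $\nu_{q_0}(\Soul)\cong\HH$ by $\Ad_s$, which fixes $\text{span}\{1,\ii\}$, rotates $\text{span}\{\jj,\kk\}$ by angle $\theta$, and in particular acts transitively on unit vectors of $\text{span}\{\jj,\kk\}$. Transporting the pair $(\jj,\kk)$ to $(X,Y)$ for any orthonormal basis $\{X,Y\}$ of $\text{span}\{\jj,\kk\}$ carries $\text{span}\{1,\jj\}$ to $\text{span}\{1,X\}$ and $\text{span}\{\ii,\kk\}$ to $\text{span}\{\ii,Y\}$. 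Combining this with Corollary~\ref{Marius} (which handles the case $X=\jj$) yields the general statement: $\text{IN}(2X,V,W)=0$ if and only if $V,W$ are linearly dependent or $\text{span}\{V,W\}\in\{\text{span}\{1,X\},\text{span}\{\ii,Y\}\}$.

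Third, I deduce that $V$ fails to be good exactly when one can produce such a $W$ linearly independent of $V$ spanning together with $V$ one of the two bad planes, equivalently, when $V$ itself lies in $\text{span}\{1,X\}$ for some unit $X\in\text{span}\{\jj,\kk\}$, or in $\text{span}\{\ii,Y\}$ for some unit $Y\in\text{span}\{\jj,\kk\}$. The union of the first family of planes as $X$ varies over the unit circle in $\text{span}\{\jj,\kk\}$ is all of $\text{span}\{1,\jj,\kk\}=\ii^{\perp}$, and the union of the second family is all of $\text{span}\{\ii,\jj,\kk\}=1^{\perp}$. Consequently, $V$ is not good if and only if $V\perp 1$ or $V\perp\ii$, which is equivalent to the desired characterization.

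I anticipate no real obstacle: all computational content has been absorbed into Corollary~\ref{Marius}, so the argument is purely a matter of equivariance and a sweep over the $S^1$-orbit of $\jj$. The one place to be careful is verifying that the $\Ad_s$-action on $\HH$ really does move the two bad planes onto the corresponding planes $\text{span}\{1,X\}$ and $\text{span}\{\ii,Y\}$; this comes directly from the fact that $\Ad_s$ fixes $1$ and $\ii$ and rotates $\text{span}\{\jj,\kk\}$ rigidly.
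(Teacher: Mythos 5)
Your argument is correct and follows essentially the same route as the paper's: both reduce to Corollary~\ref{Marius} and translate its two zero-planes $\text{span}\{1,X\}$ and $\text{span}\{\ii,Y\}$ into the condition that $V$ fails to be good exactly when $V\perp 1$ or $V\perp\ii$. Your union/sweep over the $S^1$-orbit of directions packages the ``not good'' direction a bit more cleanly than the paper's explicit witness construction, which in fact has a small slip (it says to take $X$ \emph{parallel} to the $(\jj,\kk)$-component of $V$, whereas for $V\perp 1$ one must take $X$ \emph{perpendicular} to that component so that $V\in\text{span}\{\ii,Y\}=\sigma_2$); your formulation sidesteps that bookkeeping entirely and lands directly on $\ii^{\perp}\cup 1^{\perp}$.
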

\begin{proof}
If $V$ is perpendicular to neither $1$ nor $\ii$, then Corollary~\ref{Marius} implies that $\text{IN}(X,V,W)>0$ for all non-zero $X$ and all $W$ not parallel to $V$, which means that $V$ is good.  If $V$ is perpendicular to $1$, then we can choose $X$ parallel to the $(\jj,\kk)$-component of $V$, and choose $W$ so that $\text{span}\{V,W\} = \text{span}\{\ii,X\}$.  It then follows from Corollary~\ref{Marius} (or from Corollary~\ref{C:splitting}) that $\text{IN}(X,V,W)=0$, so $V$ is not good.  Similarly, if $V$ is perpendicular to $\ii$, then we can choose $X$ parallel to the $(\jj,\kk)$-component of $V$ and choose $W$ so that $\text{span}\{V,W\}=\text{span}\{1,X\}$, so that $\text{IN}(X,V,W)=0$.
\end{proof}
Wilking described explicitly the locus of point at which zero-curvature planes occur for his metric.  The previous corollary implies that the good vectors are exactly the vectors which exponentiate to positive curvature points of Wilking's metric.  Thus, the soul inequality (which is based on second derivative information at the soul) contains complete information about which points of the sphere bundle have positive curvature.
\section{Metrics with Nonnegative Curvature on $S^3\times\R^3$}
Since Wilking's metric on $S^2\times S^3$ also extends to the vector bundle $S^3\times\R^3$, we mention here that this extension could never be altered to make the soul inequality become strict.  Much more generally:
\begin{prop}
No structures on a vector bundle over an odd dimensional base space could strictly satisfy the soul inequality.
\end{prop}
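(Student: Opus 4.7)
The plan is to derive a contradiction from the hypothesis of strict soul inequality by combining a maximum principle on the compact Grassmann bundle of 2-planes in the fibers of $\nu(\Soul)$ with the fact that every skew-symmetric endomorphism of an odd-dimensional inner product space is degenerate.

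First I would note that the strict soul inequality forces $k_\Soul(X,Y) > 0$ for every admissible $X\wedge Y \neq 0$: otherwise the right-hand side of the inequality vanishes while the left-hand side is non-negative, contradicting strictness. Similarly, for every admissible triple the factor
\[
A(X,V,W) := |R^\nabla(V,W)X|^2 + \tfrac{2}{3}(D_X D_X k^f)(W,V)
\]
must be strictly positive. By Bonnet--Myers applied to the resulting positive sectional curvature on the base, $\Soul$ is compact, and hence so is the Grassmann bundle $\pi: \mathrm{Gr}_2(\nu(\Soul)) \to \Soul$ of 2-planes in the fibers.

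Next I would consider the continuous function $K: \mathrm{Gr}_2(\nu(\Soul)) \to \R$ that assigns to each 2-plane $\sigma$ the unnormalized sectional curvature $k^f$ divided by the squared area of any basis, which is a well-defined invariant of $\sigma$. By compactness, $K$ attains a maximum at some $\sigma^*$ over a point $p^* \in \Soul$; pick an orthonormal basis $\{V^*, W^*\}$ of $\sigma^*$. Since $\nabla$ is compatible with the fiber metric, it induces a connection on $\mathrm{Gr}_2(\nu(\Soul))$ whose horizontal lifts correspond to parallel-transporting 2-planes; parallel transport preserves orthonormality, so along the horizontal lift of $t \mapsto \exp_{p^*}(tX)$ the function $K$ equals $k^f(V(t),W(t))$ for the parallel extensions $V(t), W(t)$. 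The maximality of $\sigma^*$ then forces the second derivative of this at $t=0$, which is exactly $(D_X D_X k^f)(W^*, V^*)$, to be non-positive for every $X \in T_{p^*}\Soul$.

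Finally I would invoke the odd-dimensional hypothesis. The bilinear map $(X,Y) \mapsto \lb R^\nabla(X,Y)V^*, W^* \rb$ on $T_{p^*}\Soul$ is skew-symmetric (since $R^\nabla(X,Y)$ is fiberwise skew), so its associated endomorphism $R^\nabla(V^*, W^*)$ of the odd-dimensional space $T_{p^*}\Soul$ has a non-trivial kernel. Picking any $X_0 \neq 0$ in this kernel kills $|R^\nabla(V^*, W^*)X_0|^2$, so combined with the previous paragraph $A(X_0, V^*, W^*) \leq 0$. For any $Y$ with $X_0 \wedge Y \neq 0$, the right-hand side $A \cdot k_\Soul$ of the strict soul inequality at $(X_0, Y, V^*, W^*)$ is then non-positive, contradicting strict inequality against the non-negative square on the left. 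The main conceptual hurdle is identifying the correct compact ambient space on which to apply the maximum principle --- namely the Grassmann bundle of 2-planes, so that $k^f$ descends to a basis-independent function --- and recognizing that the resulting pointwise Hessian estimate at a maximum meshes precisely with the kernel that odd-dimensionality forces on $R^\nabla(V^*, W^*)$.
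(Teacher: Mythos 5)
Your argument follows essentially the same line as the paper's: maximize $k^f$ over orthonormal pairs of normal vectors, observe that the Hessian $(D_XD_Xk^f)(W^*,V^*)$ is nonpositive at the maximum, and use odd-dimensionality of the base to find a nonzero $X_0$ annihilated by the skew endomorphism $R^\nabla(V^*,W^*)$, so that the bracketed factor on the right side of the soul inequality is $\leq 0$, contradicting strictness. One small caveat: Bonnet--Myers requires a uniform positive lower bound on Ricci curvature, not merely pointwise positivity of sectional curvature, so it does not by itself give compactness of the base; in the intended setting the base is a soul and hence already compact, and the paper likewise simply asserts that the maximum of $k^f$ is attained.
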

\begin{proof}Let $B$ denote the base space of the bundle.  Chose $p\in B$ and orthogonal unit-length vectors $W,V$ in the fiber at $p$ such that $k^f(W,V)$ is maximal (among all such $p,V,W$), which implies that $(D_XD_X k^f)(W,V)=0$ for all $X\in T_p B$.  Since $X\mapsto R^\nabla(W,V)X$ is a skew-symmetric endomorphism of the odd-dimensional vector space $T_p B$, there exists a non-zero vector $X\in T_p B$ such that $R(W,V)X=0$.  For any $Y\in T_p B$, the right side of the soul inequality vanishes for the vectors $\{X,Y,W,V\}$.
\end{proof}

\bibliographystyle{amsplain}

\end{document}